\long\def\beginpgfgraphicnamed#1#2\endpgfgraphicnamed{\includegraphics{#1}}
\numberwithin{equation}{section}
\newtheorem{theorem}{Theorem}[section]
\newtheorem{proposition}[theorem]{Proposition}
\newtheorem{corollary}[theorem]{Corollary}
\newtheorem{remark}[theorem]{Remark}
\theoremstyle{definition}
\newcommand{\R}{\mathbb{R}}
\newcommand{\A}{\mathbb{A}}
\newcommand{\bb}[1]{\mathbb{#1}}
\newcommand{\bR}{\bb{R}}
\newcommand{\bI}{\bb{I}}
\theoremstyle{definition}
\date{}
\title[Symmetrization and L\'evy Processes]{Symmetrization  of L\'evy processes and applications}
\author{Rodrigo Ba\~nuelos}\thanks{R. Ba\~nuelos was supported in part  by NSF Grant
\# 0603701-DMS}
\address{Department of Mathematics, Purdue University, West Lafayette, IN 47907}
\email{banuelos@math.purdue.edu}
\author{Pedro J. M\'endez-Hern\'andez}\thanks{P.J. M\'endez-Hern\'andez was supported in part by project $N^{0}$ 821-A7-177 of Centro de Investigaci\'on en Matem\'atica Pura y Aplicada (CIMPA)}
\address{Escuela de Matem\'atica,
 Universidad de Costa Rica,
San Jos\'e, Costa Rica}
\email{pedro.mendez@ucr.ac.cr}
\begin{document}
\maketitle

\begin{abstract}
  \noindent  It is shown that many of the classical generalized isoperimetric inequalities for the Laplacian when viewed in terms of Brownian motion extend to a wide class of L\'evy processes. The results are derived from the multiple integral inequalities of Brascamp, Lieb and Luttinger but  the probabilistic structure of the processes plays a crucial role in the proofs. 

\end{abstract}

\section{Introduction}

Let $D$ be an open connected set in $\bR^{d}$ of finite Lebesgue measure. Henceforth we shall refer to such sets simply as domains. We will  denote by $D^*$   the open ball in $\bR^{d}$ centered
at the origin $0$ with the same Lebesgue measure as $D$, and $|D|$ will denote the Lebesgue measure of $D$. There is a large class of quantities which are related to Brownian motion killed  upon leaving $D$ that are maximized, or minimized, by the corresponding quantities  for $D^*$. Such results often go by the name of  {\it generalized isoperimetric inequalities}. They  include the celebrated Rayleigh-Faber-Krahn inequality on the first eigenvalue of 
the Dirichlet Laplacian, inequalities for transition densities (heat kernels), Green functions, and electrostatic capacities (see \cite{bandle},  \cite{luttinger1}, \cite{luttinger2},  \cite{luttinger3} and   \cite{polya-szego}). 

Many of these {\it isoperimetric inequalities} can be beautifully  formulated  in terms of exit times of  the Brownian motion $B_t$ from the domain $D$. For example, if $\tau_D$ is the first exit time of $B_t$ from $D$, then for all $x \in D$   
\begin{equation} P^x \left\{ \,\tau_D > 0 \,\right\} \leq P^0 \left\{ \,\tau_{D^*} > 0 \,\right\}
,\label{exitbrownian}
\end{equation} 
where $0$ is the origin of $\bR^d$. Inequality (\ref{exitbrownian}) contains not only the classical  Rayleigh-Faber-Krahn inequality but inequalities for heat kernels and Green functions as well.  This inequality is now classical and can be found in many places in the literature.  For one of its first occurrences, using the Brascamp-Lieb-Luttinger multiple integrals techniques,  please see Aizenman and Simon \cite{IS}.  Similar inequalities can be obtained by these methods for domains of fixed inradius rather than fixed volume.  For more on this, we refer the reader to \cite{banuelos} and \cite{mendez}.  Also, versions of some of these results hold for Brownian motion on spheres and hyperbolic spaces, see \cite{BS} and references therein. 

Once these  isoperimetric-type inequalities are formulated in terms of exit times of Brownian motion, it is completely natural to enquire as to their validity  for other stochastic processes, and particularly for more general  L\'evy processes whose generators, as pseudo differential operators,  are natural extensions of the Laplacian. Such extensions have been obtained in recent years for the so called  ``symmetric stable processes" in $\bR^d$ and for more general processes obtained from subordination of Brownian motion. We refer the reader to  \cite{banuelos}, \cite{betsakos}, \cite{mendez}, \cite{simon}. 

The purpose of this paper is to show that many of these results continue to hold for very general L\'evy processes. At the heart of
these extensions are the  rearrangement inequalities of Brascamp, Lieb and Luttinger \cite{brascamp}. However, the probabilistic structure of
L\'evy processes enters in a very crucial way. Of particular importance for our method is the fact, derived from the L\'evy-Khintchine formula,
that our processes are weak limits of sums of a compound Poisson process and a Gaussian process.

We begin with a general description of L\'evy processes. A L\'evy process  $X_t$ in $\bR^d$ is  a stochastic process with independent and stationary 
increments which is ``stochastically" continuous. That is, for all $0<s< t<\infty$, $A\subset \bR^d$,  
\[P^{x}\{\,X_t-X_s\in A\,\}=P^{0}\{\,X_{t-s}\in A\,\},\] 
for any given sequence  of  ordered times $0<t_1<t_2<\cdots <t_m<\infty,$ the random variables  $X_{t_1}-X_0,\,\,  X_{t_2}-X_{t_1}, \dots, X_{t_m}-X_{t_{m-1}}$
are independent, and  for all $\varepsilon>0$, 
\[\lim_{t\to s}P^{x}\left\{\,|X_t-X_s |>\varepsilon\,\right\}=0.\]

The celebrated L\'evy-Khintchine formula
\cite{sato} guarantees the existence of a triple $\left( b , \A, \nu \right)$ such that the characteristic function of the process is given by 
\begin{equation}    E^x\left[\, e^{i \xi \cdot X_t}\,\right]  = e^{-t
\Psi(\xi)+i \xi \cdot x},\label{levyX} \end{equation}
where 
\begin{equation*} 
\Psi(\xi)= -i \langle b, \xi \rangle + \frac{1}{2}\langle \A \cdot \xi, \xi \rangle +
\int_{\bR^d}\left[\, 1 + i \langle \xi,y \rangle\,\bI_{B} - e^{i\,\xi \cdot y} \,\right]\,
d\nu(y).
\end{equation*}
Here,  $b \in \bR^d$,   $\A$ is a nonnegative $d \times d$ symmetric matrix, 
$\bI_B$ is the indicator function of the ball $B$ centered at the origin of radius 1, and $\nu$ is a measure  on $\bR^d$ such that 
\begin{equation}
\int_{\bR^d} \frac{|y|^2}{1+|y|^2} \,d\nu(y) < \infty \, \text{ and }\nu\left(\, \{ 0\}\, \right)=0.\label{levy-measure}
\end{equation}
The triple $\left( b , \A, \nu \right)$ is called the characteristics of  the process and  the measure $\nu$ is called the L\'evy measure of the process. Conversely, given a triple $\left( b , \A, \nu \right)$ with such properties  there is  L\'evy processes corresponding to it.  We will use the fact that any L\'evy process has a version with  paths that  are right continuous with left limits, so called ``c\`adl\`ag" paths. 

Next we recall the basic facts on symmetrization needed to state our results, more details on the properties of symmetrization  used in this paper can be found in the appendix in Section 6. Given a positive measurable function $f$, its symmetric decreasing rearrangement $f^*$ is the  unique  function satisfying 
\[
f^*(x)=f^*(y), \text{ if } |x|=|y|,\,\]
\[ f^*(x) \leq f^*(y), \text{ if } |x| \geq  |y|,\] \[ \lim_{|x| \to |y|^+}f^*(x)=f^*(y),
\]
and  
\begin{equation} 
m\left\{ f >  t \right \}= m\left\{ f^* >  t \right\}\label{phi_n},
\end{equation}
for all $ t \geq 0$.  Following \cite{lieb-loss}, under the assumption that $f$ vanishes at infinity,  an explicit expression for this function  is:

\begin{equation*}
f^*(x)=\int_0^{\infty} \chi^{*}_{\{|f|>t\}}(x) \, dt.
\end{equation*}
This explicit  representation is used only at the end of section \S4 in the case that $f$ is the indicator function of an open set  of finite area .

For symmetrization purposes, in this paper we will only consider L\'evy  measures  $\nu$ that are absolutely continuous with
respect to  the Lebesgue measure $m$. It may be that some of the results in this paper hold for more general L\'evy processes but at this stage we are not able to go beyond the absolute continuity case. 
Let $\phi$ be the density of $\nu$ and $\phi^*$ be its symmetric decreasing rearrangement. Since the function 
\[\psi(y)= 1-\frac{ |y|^2}{1+|y|^2} \]
is a positive, decreasing and radially symmetric, that is, $\psi^*=\psi$,   it follows that (see Theorem 3.4 in \cite{lieb-loss}) 
\begin{equation}
\int_{\bR^d} \,\frac{|y|^2}{1+|y|^2}\,\phi^*(y)\, dy\,\leq \,
\int_{\bR^d} \,\frac{|y|^2}{1+|y|^2}\,\phi(y)\, dy\, < \infty \,. 
\end{equation}
Hence the measure $\phi^*(y)\,dy$ satisfies (\ref{levy-measure}) and it is also a L\'evy measure.

We denote the $d \times d$ identity matrix by $I_d$ and the determinant of $\A$ by $\det \A$. Set $\A^* = \left(\det \A\right)^{1/d}I_d$ and define $X^*_t$ to be the  rotationally invariant L\'evy process  in $\bR^d$  associated to the triple $\left(0,
\A^*,\phi^*(y) dy \right)$. We will often refer to $X^*_t$ as the symmetrization of $X_t$. 

Notice that  
\begin{equation}    E^x\left[\, e^{i \xi \cdot X^*_t}\,\right]  = e^{-t
\Psi^*(\xi)+i \xi \cdot x},\label{levyX*} \end{equation}
where 
\begin{eqnarray*} 
\Psi^*(\xi)&=& \frac{1}{2}\langle \A^* \cdot \xi, \xi \rangle +
\int_{\bR^d}\left[\, 1 - e^{i\,\xi \cdot y} \,\right]
\phi^*(y)\,dy\\
&=&\frac{1}{2}\langle \A^* \cdot \xi, \xi \rangle +
\int_{\bR^d}\left[\, 1 - \cos(\xi \cdot y) \,\right]\phi^*(y)\,dy,
\end{eqnarray*}
where the last inequality follows from the fact that $\phi^*$ is symmetric and $y\to \sin(\xi \cdot y)$ is antisymmetric. 
 
The next two theorems are the main results of this paper. 

\begin{theorem}\label{fdd} 
Suppose  $X_t$ is a L\'evy process with  L\'evy measure absolutely continuous with respect to the Lebesgue measure and  let $X_t^{*}$ be the symmetrization of $X_t$ constructed as above. Let $f_1,\ldots,f_m$  be nonnegative continuous  functions and let $D_1,\ldots,D_m$  be domains in $\bR^d$. 
Then for all $z \in \bR^d$,

\begin{equation} \label{fdd0}
E^{z}\,\left[\prod_{i=1}^{m} f_i( X_{t_i})\, \bI_{D_i}( X_{t_i})\,\right]\\  
\leq E^0\,\left[\prod_{i=1}^{m}   f^*_i( X^*_{t_i})\,\bI_{D_i^*}( X^*_{t_i})\,\right],
\end{equation}
for all $0 \leq t_1 \leq \ldots \leq t_m$. 
\end{theorem}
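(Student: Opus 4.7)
The plan is to use the L\'evy--It\^o decomposition to reduce to the case where $X_t$ is literally the sum of a drift, a Brownian motion, and an independent compound Poisson process; for such a process the joint density of $(X_{t_1},\ldots,X_{t_m})$ admits an explicit series expansion to which the rearrangement inequality of Brascamp--Lieb--Luttinger (BLL) applies term by term. The general case then follows by a weak limit. Concretely, I would truncate the L\'evy density to $\phi_n(y)=\phi(y)\bI_{\{|y|>1/n\}}$, which defines a finite measure by (1.3), and consider the L\'evy process $X_t^{(n)}$ with triple $(b,\A,\phi_n\,dy)$. This process can be written as $X_t^{(n)}=\tilde b_n t+G_t+C_t$ with independent Gaussian $G_t\sim N(0,t\A)$ and compound Poisson $C_t$ of jump intensity $\phi_n$. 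Since $\int_{|y|\le 1/n}|1+i\xi\cdot y-e^{i\xi\cdot y}|\,\phi(y)\,dy\to 0$ (using the second--moment bound from (1.3)), the characteristic exponent of $X_t^{(n)}$ converges to that of $X_t$, so $X_t^{(n)}\Rightarrow X_t$ in finite-dimensional distributions; the parallel construction with $(0,\A^*,\phi_n^*\,dy)$ yields $X_t^{*,(n)}\Rightarrow X_t^*$.

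For the approximating process, conditioning on the Poisson jump counts $n_1,\ldots,n_m$ in $(t_{i-1},t_i]$ gives
\[
X_{t_i}^{(n)}=z+\tilde b_n t_i+\sum_{j\le i}\gamma_j+\sum_{j\le i,\,1\le k\le n_j}u_{j,k},
\]
where $\gamma_j\sim N(0,(t_j-t_{j-1})\A)$ and the jumps $u_{j,k}$ have density $\phi_n/\lambda_n$, all independent. The left side of (1.4) expands as a convergent series (indexed by $(n_1,\ldots,n_m)$) of multiple integrals in $(\vec\gamma,\vec u)$, in which every factor of every integrand depends on a single linear combination of these variables: the $f_i$ and $\bI_{D_i}$ on the affine form $z+\tilde b_n t_i+L_i(\vec\gamma,\vec u)$, and the Gaussian densities $g_{t_j-t_{j-1},\A}$ and the jump densities $\phi_n$ each on a single coordinate. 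This is precisely the setting in which BLL applies.

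Applying BLL to each such term, the constant shift $z+\tilde b_n t_i$ is absorbed by translation invariance of the symmetric decreasing rearrangement: $[f_i(z+\tilde b_n t_i+\cdot)]^*=f_i^*$ and $[\bI_{D_i}(z+\tilde b_n t_i+\cdot)]^*=\bI_{D_i^*}$. The anisotropic Gaussian $g_{s,\A}$ rearranges to $g_{s,\A^*}$: its level sets are ellipsoids whose volumes scale with $\sqrt{\det \A}$, which forces the radial replacement to be the isotropic Gaussian with covariance $s(\det\A)^{1/d}I_d=s\A^*$. The jump density rearranges to $\phi_n^*$. After these replacements, the term in question becomes exactly the corresponding term of the series expansion of $E^0[\prod_i f_i^*(X_{t_i}^{*,(n)})\bI_{D_i^*}(X_{t_i}^{*,(n)})]$, so summing over $(n_1,\ldots,n_m)$ yields (1.4) for $X_t^{(n)}$.

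The main technical obstacle is the passage to the limit $n\to\infty$: because $f_i$ is only continuous and $D_i$ only open, the integrand $\prod f_i\bI_{D_i}$ is merely lower semicontinuous, and weak convergence does not directly transfer expectations of such functions. I would handle this by approximating each $\bI_{D_i}$ from below by a monotone sequence of continuous functions of compact support in $D_i$, applying the already-established inequality for $X_t^{(n)}$ to the resulting continuous truncations, and then combining the Portmanteau theorem (for $X_t^{(n)}\Rightarrow X_t$ and $X_t^{*,(n)}\Rightarrow X_t^*$) with monotone convergence to pass to the limit on both sides and recover (1.4).
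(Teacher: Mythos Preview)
Your approach is essentially the paper's: truncate the L\'evy density to $\phi_n=\phi\,\bI_{\{|y|>1/n\}}$, write the approximating process as Gaussian plus independent compound Poisson, condition on the Poisson jump counts, apply BLL term by term (using $[g_{s,\A}]^*=g_{s,\A^*}$ for the Gaussian factors and $[\phi_n]^*=\phi_n^*$ for the jump factors), pass to the weak limit via convergence of characteristic exponents, and finally upgrade from continuous test functions to indicators $\bI_{D_i}$ by monotone approximation from below. The order of limits you sketch (continuous truncation, then $n\to\infty$, then monotone convergence) matches the paper's.

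There is one technical oversight. When $\A$ is singular your Gaussian increments $\gamma_j\sim N(0,(t_j-t_{j-1})\A)$ have no Lebesgue density on $\R^d$, so the multiple integral with factors $g_{t_j-t_{j-1},\A}(\gamma_j)$ is not available and BLL (as stated for integrals over $\R^d$) does not apply directly. The paper resolves this uniformly by further perturbing the Gaussian part to $\A_n=\A+\epsilon_n I_d$ with $\epsilon_n\downarrow 0$, so that the approximating process always carries a nondegenerate Gaussian component; since $\det\A_n\to\det\A$ one still has $\A_n^*=(\det\A_n)^{1/d}I_d\to\A^*$, and the weak convergence on the symmetrized side goes through. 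A second point you assert without justification is $X_t^{*,(n)}\Rightarrow X_t^*$, which requires $\phi_n^*\to\phi^*$ a.e.\ with $\phi_n^*\le\phi^*$; the paper supplies this in its appendix (order preservation and a.e.\ preservation under rearrangement). With these two additions your argument is complete and coincides with the paper's.
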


One easily proves that this result is not valid when the functions $f_1,\ldots,f_m$  are not continuous. However, if we assume further that the distributions of $X_t$ and $X_t^*$ are absolutely continuous with respect to the Lebesgue measure, we can extend Theorem \ref{fdd} to measurable functions. 

\begin{theorem}\label{fdd1} 
Suppose  $X_t$ is a L\'evy process with  L\'evy measure absolutely continuous with respect to the Lebesgue measure and  let $X_t^{*}$ be the symmetrization of $X_t$ as constructed above. Assume further that for all $t>0$ the distributions of  $X_t$ and $X_t^{*}$  are absolutely continuous with respect to the Lebesgue measure. That is, for all $t>0$, 
$$
P^x\{X_t\in A\}=\int_A p(t, x, y) dy
$$
and 
$$
P^x\{X_t^*\in A\}=\int_A p^*(t, x, y) dy,
$$
for any Borel set $A\subset\bR^d$. 
Let $f_1,\ldots,f_m$, $m\geq 1$,  be nonnegative  measurable functions.  Then for all $z \in \bR^d$,

\begin{equation} \label{fdd4}
E^{z}\,\left[\prod_{i=1}^{m} f_i( X_{t_i})\right]\\  
\leq E^0\,\left[\prod_{i=1}^{m} f^*_i( X^*_{t_i})\right],\nonumber
\end{equation}
for all $0 \leq t_1 \leq \ldots \leq t_m$. 
\end{theorem}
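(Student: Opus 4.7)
The plan is to reduce Theorem~\ref{fdd1} to Theorem~\ref{fdd} by approximating each measurable $f_i$ by a sequence of continuous functions, using the hypothesized transition densities to recast both sides as Lebesgue integrals where standard convergence theorems apply.

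First I would reduce to the case where each $f_i$ is bounded and compactly supported. The identities $(f_i\wedge N)^{*}=f_i^{*}\wedge N$ and $(f_i\chi_{B_R})^{*}\uparrow f_i^{*}$ as $R\to\infty$ (both following from the fact that rearrangement is determined by distribution functions and is monotone under increasing pointwise limits of nonnegative functions), combined with the hypothesis that $X_t$ and $X_t^{*}$ have densities, let me apply monotone convergence on both sides. Indeed, by the Markov property and Chapman--Kolmogorov the joint laws of $(X_{t_1},\dots,X_{t_m})$ under $P^{z}$ and of $(X^*_{t_1},\dots,X^*_{t_m})$ under $P^{0}$ are absolutely continuous on $(\bR^d)^m$, so each expectation is a Lebesgue integral in $y_1,\dots,y_m$. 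Thus the problem reduces to the case $0\le f_i\le N$ with $\operatorname{supp} f_i\subset B_R$ for some fixed $N,R$.

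For such $f_i$, I would invoke Lusin's theorem (or density of $C_c(\bR^d)$ in $L^1$) to pick continuous $g_i^{(n)}$ with $0\le g_i^{(n)}\le N$, $\operatorname{supp} g_i^{(n)}\subset B_{R+1}$, and $g_i^{(n)}\to f_i$ in $L^1$, and then pass to a common subsequence so that $g_i^{(n)}\to f_i$ pointwise a.e.\ for every $i$. Applying Theorem~\ref{fdd} with $D_i=B_{R+1}$ (so that $D_i^{*}=D_i$) yields
\[
E^{z}\Bigl[\prod_{i=1}^{m}g_i^{(n)}(X_{t_i})\Bigr]\;\le\;E^{0}\Bigl[\prod_{i=1}^{m}(g_i^{(n)})^{*}(X^*_{t_i})\Bigr].
\]
On the left, the uniform bound $\prod_i g_i^{(n)}(y_i)\le N^{m}\chi_{(B_{R+1})^{m}}$ furnishes an integrable dominating function, and dominated convergence against the joint density of $(X_{t_1},\dots,X_{t_m})$ sends the left side to $E^{z}\bigl[\prod f_i(X_{t_i})\bigr]$.

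The main obstacle is the passage to the limit on the right, for which I would invoke the nonexpansive property $\|g^{*}-h^{*}\|_{L^1}\le\|g-h\|_{L^1}$ of symmetric decreasing rearrangement (recorded in the appendix of Section~6). This gives $(g_i^{(n)})^{*}\to f_i^{*}$ in $L^1$, and after a further subsequence, pointwise a.e. Since $0\le g_i^{(n)}\le N$ and $g_i^{(n)}$ is supported in $B_{R+1}$, one has $(g_i^{(n)})^{*}\le N\chi_{B_{R+1}}$, providing the dominating function needed. Dominated convergence against the joint density of the symmetrized process then sends the right side to $E^{0}\bigl[\prod f_i^{*}(X^*_{t_i})\bigr]$, completing the proof of \eqref{fdd4}.
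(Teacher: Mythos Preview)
Your argument is correct and follows essentially the same two–step strategy as the paper: first reduce to bounded $f_i$ via truncation and monotone convergence, then approximate each bounded $f_i$ by uniformly bounded continuous functions, apply Theorem~\ref{fdd}, and pass to the limit by dominated convergence against the joint densities of $(X_{t_1},\dots,X_{t_m})$ and $(X^*_{t_1},\dots,X^*_{t_m})$.

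The one noteworthy divergence is in how you handle convergence of the rearrangements on the right–hand side. You invoke the $L^1$–nonexpansiveness $\|g^*-h^*\|_{L^1}\le\|g-h\|_{L^1}$, but this is \emph{not} what Section~6 records; the appendix instead proves Proposition~\ref{limit-th}, which says directly that if $\phi_n\to\phi$ a.e.\ (with a uniform bound), then $\phi_n^*\to\phi^*$ a.e. That is the tool the paper actually uses, and it makes your detour through $L^1$ convergence, subsequence extraction, and the preliminary reduction to compact support unnecessary: once the $f_i$ are bounded by $M_i$, any a.e.\ continuous approximants $g_i^{(n)}\le M_i$ give $(g_i^{(n)})^*\to f_i^*$ a.e.\ by Proposition~\ref{limit-th}, and the constant $\prod_i M_i$ already dominates against a probability measure. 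Your route is valid, just slightly longer.
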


\begin{remark}
A sufficient condition for the absolute continuity of the law of a L\'evy process is given in \cite{sato}, page 177. In our case this is satisfy by both $X_t$ and $X_t^*$ whenever $\det(\A)>0$ or  $\phi\not\in L^1(\bR^d)$.\end{remark}

As we shall see below, Theorem \ref{fdd} implies a generalization of  (\ref{exitbrownian})  to L\'evy processes whose L\'evy measure is absolutely continuous with respect to the Lebesgue measure.  In fact, we will obtain a more general result which applies to Schr\"odinger perturbations of L\'evy semigroups. Let $D\subset \bR^d$ be a domain of finite measure, and consider  

\[ \tau_{D}^X=\inf\left\{ t> 0: X_t \notin D\right\},\]
the first exit time of $X_t$ from $D$. We also have the corresponding quantity  $\tau_{D^*}^{X^*}$ for $X_t^{*}$ in $D^*$. As explained in \S5,  the following  isoperimetric--type inequality  is a consequence of Theorem \ref{fdd}.

\begin{theorem} \label{levy}
Let $D$ be a domain in $\bR^d$ of finite measure and  $f$ and $V$ be  nonnegative continuous functions. Suppose $X_t$ is a L\'evy process with  L\'evy measure absolutely continuous with respect to the Lebesgue measure and  $X_t^{*}$ is the symmetrization of $X_t$. Then  
for all $z \in \bR^d$ and all $t>0$, 
\begin{eqnarray}\label{main}
&&E^{z}\Big\{\, f(X_t)\,\exp\left( -\int_0^t V(X_s)ds\,\right);\,\tau_D^X>t \,\Big\}\\
&\leq & E^{0}\Big\{\, f^*(X^*_t)\,\exp\left( -\int_0^t V^*(X^*_s)ds\,\right);\,\tau_{D^*}^{X^*}>t \,\Big\}.\nonumber
\end{eqnarray}
\end{theorem}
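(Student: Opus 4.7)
\emph{Plan.} The strategy is to derive Theorem~\ref{levy} from Theorem~\ref{fdd} by a discretize-then-pass-to-the-limit argument. I would fix $z\in D$ (both sides are $0$ if $z\notin D$), set $t_k=kt/n$ for $k=0,\ldots,n$, and use two basic approximations valid $P^z$-a.s.\ along the c\`adl\`ag paths of $X$:
\[
\exp\!\Bigl(-\int_0^t V(X_s)\,ds\Bigr)=\lim_{n\to\infty}\prod_{k=1}^{n}\exp\!\Bigl(-\tfrac{t}{n}V(X_{t_k})\Bigr),\qquad \mathbf{1}\{\tau_D^X>t\}\le\prod_{k=0}^{n}\mathbf{1}_D(X_{t_k}).
\]
The first is convergence of Riemann sums for the continuous function $s\mapsto V(X_s)$; the second is immediate from the definition of $\tau_D^X$. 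For a L\'evy process whose L\'evy measure is absolutely continuous, the product on the right decreases in $n$ to $\mathbf{1}\{\tau_D^X>t\}$ in $L^1(P^z)$ by quasi-left continuity (no fixed jump times, so $\partial D$ is not hit on a refinement of the grid with positive probability). The analogous statements hold for $X^*$, $V^*$, and $D^*$.

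\emph{Applying Theorem~\ref{fdd}.} For each $n$ I would apply Theorem~\ref{fdd} with times $t_1\le\cdots\le t_n$, domains $D_k=D$, the continuous functions $g_k(x)=\exp(-(t/n)V(x))$ for $k<n$, and at the terminal time the factor $f$ (either by inserting a repeated time $t_n=t$ or by combining with $g_n$). The rearrangement step should convert $g_k$ into $\exp(-(t/n)V^*)$: although $e^{-(t/n)V}$ does not itself vanish at infinity, the equimeasurability of $V$ and $V^*$ gives $(1-e^{-(t/n)V})^*=1-e^{-(t/n)V^*}$, and the cleanest way to justify this inside Theorem~\ref{fdd} is through the layer-cake representation $e^{-cV(x)}=\int_0^1\mathbf{1}_{\{V(x)<-c^{-1}\log u\}}\,du$, applying Theorem~\ref{fdd} pointwise in the auxiliary variables $u_1,\ldots,u_n$ to the indicators of the finite-measure sets $\{V<c\}\cap D$ and then reassembling the integrals on the symmetrized side. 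The outcome is the discrete inequality
\[
E^z\Bigl[f(X_t)\prod_{k=1}^{n}e^{-(t/n)V(X_{t_k})}\mathbf{1}_D(X_{t_k})\Bigr]\le E^0\Bigl[f^*(X^*_t)\prod_{k=1}^{n}e^{-(t/n)V^*(X^*_{t_k})}\mathbf{1}_{D^*}(X^*_{t_k})\Bigr].
\]

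\emph{Passing to the limit and main obstacle.} Letting $n\to\infty$, dominated convergence (both sides are bounded by $\|f\|_\infty$ times $1$, and one uses the a.s.\ Riemann-sum convergence together with the monotone passage $\prod\mathbf{1}_D(X_{t_k})\downarrow\mathbf{1}\{\tau_D^X>t\}$) converts both sides of the discrete inequality into the Feynman--Kac expressions appearing in~\eqref{main}, completing the proof. The main technical obstacle is the middle step: reconciling the symmetric decreasing rearrangement needed in Theorem~\ref{fdd} with the desired $e^{-(t/n)V^*}$ factor, given that $e^{-(t/n)V}$ does not vanish at infinity. The layer-cake route avoids this issue by reducing to indicators of $\{V<c\}\cap D$, for which Theorem~\ref{fdd} applies cleanly; a secondary point is the quasi-left-continuity argument needed to upgrade the indicator bound to a limit equality, which is where the absolute continuity hypothesis on the L\'evy measure enters.
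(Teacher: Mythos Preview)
Your approach is correct and is essentially the paper's own proof: discretize the time interval, apply Theorem~\ref{fdd} to the finite product $f(X_t)\prod_{k}e^{-(t/n)V(X_{t_k})}\mathbf{1}_D(X_{t_k})$, and then pass to the limit. The only cosmetic differences are that the paper first exhausts $D$ by compactly contained subdomains $D_k\nearrow D$ (rather than invoking quasi-left-continuity), and that it applies Theorem~\ref{fdd} directly with $f_i=e^{-(t/m)V}$ and the identity $[\exp(-sV)]^*=\exp(-sV^*)$, so your layer-cake detour is unnecessary---Theorem~\ref{fdd} does not require the $f_i$ to vanish at infinity.
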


Our symmetrization results  are based  on the following now classical rearrangement inequality  of Brascamp, Lieb and Luttinger \cite{brascamp}.

\begin{theorem}{\label{bll}}
Let  $f_1, \dots, f_m$ be nonnegative functions in  $\bR^{d}$ 
and denote by 
$f_1^*, \dots, f_m^*$ be their symmetric decreasing
rearrangements. Then 
\begin{eqnarray*} 
\int_{\bR^d}\ldots \int_{\bR^d} \,\prod_{j=1}^m f_{j}\left( \sum_{i=1}^{k} b_{ji} x_{i}\right) \,dx_{1}\cdots dx_{k}
&\leq&\\ \int_{\bR^d} \ldots \int_{\bR^d}\, \prod_{j=1}^m f_{j}^*\left(\sum_{i=1}^{k} b_{ji} x_{i}\right)\,dx_{1}\cdots dx_{k},
\end{eqnarray*}
for all positive integers $k,m$, and  any $m\times k$ matrix $B=[b_{ji}]$.
\end{theorem}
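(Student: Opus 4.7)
My plan is to follow the classical three-step scheme of Brascamp, Lieb and Luttinger: first reduce from general nonnegative $f_j$ to indicator functions via the layer-cake formula; second, reduce the $d$-dimensional assertion to a one-dimensional one by Steiner symmetrization together with Fubini; and third, prove the one-dimensional rearrangement inequality by a continuous sliding argument applied to finite unions of intervals.

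For the first step I would write $f_j(x)=\int_0^{\infty}\bI_{\{f_j>t_j\}}(x)\,dt_j$ (truncating if necessary so that each $f_j$ vanishes at infinity). Substituting into both sides, Fubini's theorem and the identity $(\bI_A)^{*}=\bI_{A^{*}}$ reduce the problem to the case
\[
\int_{\bR^d}\!\cdots\!\int_{\bR^d}\prod_{j=1}^{m}\bI_{A_j}\!\Bigl(\sum_{i=1}^{k}b_{ji}x_i\Bigr)dx_1\cdots dx_k \;\leq\; \int_{\bR^d}\!\cdots\!\int_{\bR^d}\prod_{j=1}^{m}\bI_{A_j^{*}}\!\Bigl(\sum_{i=1}^{k}b_{ji}x_i\Bigr)dx_1\cdots dx_k
\]
for bounded measurable sets $A_j\subset\bR^d$, where $A_j^{*}$ is the open ball centered at the origin of the same Lebesgue measure.

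For the second step, fix a unit vector $u\in\bR^d$ and decompose $x_i=y_i+s_i u$ with $y_i\in u^{\perp}$ and $s_i\in\bR$. Each linear form splits as $\sum_i b_{ji}x_i=\sum_i b_{ji}y_i+\bigl(\sum_i b_{ji}s_i\bigr)u$, so for each fixed $(y_1,\ldots,y_k)$ the inner integral in the $s_i$ is precisely a one-dimensional BLL integral for the slices $E_j(y_j):=\{s\in\bR:y_j+su\in A_j\}$. Granting the one-dimensional version of the theorem, integration in the $y_i$ shows that the $d$-dimensional integral does not decrease when each $A_j$ is replaced by its Steiner symmetrization $\sigma_u A_j$ with respect to $u^{\perp}$. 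Iterating along a suitable dense sequence of directions $u_n$ produces sets whose indicators converge in $L^{1}$ to $\bI_{A_j^{*}}$; dominated convergence (boundedness of the $A_j$ controls the supports) passes to the limit and delivers the full $d$-dimensional inequality.

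The main obstacle is therefore the one-dimensional case. I would approximate each $E_j\subset\bR$ by a finite union of disjoint open intervals (using inner regularity) and then introduce a continuous one-parameter deformation that translates these intervals toward the origin until, for each $j$, they coalesce into the single symmetric interval $E_j^{*}$. At a generic deformation parameter the integrand is piecewise linear in the translation variables, whereas at the isolated times when two intervals inside some $E_j$ meet and merge the integral jumps by an amount that, after careful bookkeeping, is the Lebesgue measure of an explicit ``overlap'' set and hence nonnegative. Checking nonnegativity of each generic-derivative contribution and of each jump is the combinatorial heart of the argument; once this monotonicity along the sliding flow is established, all the remaining steps---the layer-cake reduction, the Fubini dimensional reduction, the iteration over dense directions and the final $L^{1}$ limit---are entirely routine.
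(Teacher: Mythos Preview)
The paper does not give its own proof of this statement: Theorem~\ref{bll} is quoted as the classical Brascamp--Lieb--Luttinger inequality with a citation to \cite{brascamp}, and is used throughout as a black box. There is therefore nothing in the paper to compare your argument against.

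That said, your outline is essentially the original proof in \cite{brascamp}: layer-cake reduction to indicators, Fubini plus Steiner symmetrization in a fixed direction to reduce to $d=1$, iteration over a dense set of directions with an $L^1$ limit, and the one-dimensional sliding argument on finite unions of intervals. Two small corrections. In your second step the relevant slice is
\[
E_j\Bigl(\sum_{i=1}^{k} b_{ji}y_i\Bigr)=\Bigl\{s\in\bR:\; \sum_{i=1}^{k} b_{ji}y_i+su\in A_j\Bigr\},
\]
i.e.\ the section of $A_j$ through the point $\sum_i b_{ji}y_i\in u^{\perp}$, not $E_j(y_j)$; this is what makes the inner integral a genuine one-dimensional BLL integral with the same matrix $B$. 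In your third step, the BLL functional is actually \emph{continuous} along the sliding flow (no jumps when intervals merge); the work lies in showing that the derivative at generic times is nonnegative, which comes from analysing how the boundary of the feasible region in $(s_1,\dots,s_k)$ moves. With these adjustments your sketch matches the argument in \cite{brascamp}.
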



As explained in \cite{banuelos} and \cite{mendez}, if we additionally assume that the process $X_t$ is isotropic unimodal, Theorem \ref{fdd}
is an immediate consequence of  Theorem \ref{bll}.  Recall that  $X_t$ is isotropic unimodal if it has  transition densities $p(t, x,y)$ of the form
\begin{equation} \label{rotation} p(t,x,y)=q_t(|x-y|),\end{equation}
where $q_t$ is a function such that 
\[q_t(r_1) \leq q_t(r_2),\]
for all $r_1 \geq r_2$ and all $t>0$. Thus for such L\'evy processes (with $y$ fixed)
\[\left[\,p(t, \cdot, y)\,\right]^*=p(t, \cdot, 0),\] 
and  $X_t=X^*_t$. This class of L\'evy processes  includes the Brownian motion, rotational invariant symmetric $\alpha$-stable processes, relativistic stable processes and any other subordinations of the Brownian motion.  Notice that in our more general setting, and under the assumption that the distribution of  $X_t$ is absolutely continuos relative to the Lebesgue measure, we cannot even ensure that  
$\left[\,p(t, \cdot,y)\right]^*$ is the transition density of a L\'evy processes.



The rest of the paper is organized as follows. In \S2 we will prove Theorem \ref{fdd} for Compound Poisson processes. We will consider the case of  Gaussian L\'evy processes  in \S3.  Theorem \ref{fdd} and Theorem \ref{fdd1} are proved in \S4, using  a weak approximation of  $X_t$ and $X^*_t$ by 
L\'evy processes of the form $G_t+C_t$, where $G_t$ is a nondegenerate
Gaussian process and $C_t$ is an independent compound process. We will then show  some of the applications  in \S5.  For the convenience of the reader, and for completeness, we include an appendix in \S6 with various facts on symmetrization used in the proofs.

\section{Symmetrization of compound Poisson processes}

In this section we prove a version of the inequality (\ref{fdd0}) for compound Poisson processes, in the case that $D_i=\bR^d$ for all 
$1 \leq i \leq m$.  This result, combined with the results in \S3,  will lead to a proof of Theorem \ref{fdd}. 

We start by recalling the structure of compound Poisson processes in terms of random walks. If $C_t$ is a  compound Poisson process, starting at $x$, then its  characteristic
function is given by 
\begin{equation}  E^x\left(\, e^{i \xi \cdot C_t}\, \right)  = e^{i x \cdot \xi -t
\Psi_C(\xi)}, \label{levyC} 
\end{equation} 
where 
\begin{equation} \Psi_C(\xi)= c
\int_{\bR^d}\,\left[\, 1-e^{i \xi \cdot y}\, \right] \,
\phi(y)\, dy, \nonumber
\end{equation}
and  $\phi$ is a probability density.  We now use the fact that $C_t$ can be written in terms of  sums of independent random variables.  That is, by
Theorem 4.3 \cite{sato} there exist
a Poisson process $N_t$ with parameter $c >0$, and a sequence of i.i.d. random variables
$\{X_n\}_{n=1}^{\infty}$ such that 

\begin{enumerate}
\item  $\{N_t\}_{ t >0}$ and $\{X_n\}_{n=1}^{\infty}$ are independent,
\item  $\phi(y)$ is the density of the distribution of $X_i$, $i \geq 1$,
\item  $C_t= S_{N_t}+x$, where $S_n=X_1+\ldots+X_n$ and $S_0=0$.
\end{enumerate}
Hence if $f$ is a nonnegative Borel function, then

\begin{eqnarray}
\nonumber E^x\left[\, f\left(C_t\right) \,\right] & = & E^x\left[ \,f\left(S_{N_t}\right) \,
\right]\\ \label{dist-ct}  &=& \sum_{n=0}^{\infty} P\left[\,
N_t=n\,\right]\, E\left[ \,f\left(x+S_{n}\right)\right].
\end{eqnarray}

Let  $ \phi^*$ be the symmetric decreasing rearrangement of $\phi$. Since  
\[ \int_{\bR^d} \, \phi^*(y)\, dy =  \int_{\bR^d} \, \phi(y) \, dy =1,\]
we can consider a new sequence of i.i.d. random variables $\{X^*_n\}_{n=1}^{\infty}$ independent of $N_t$ such that   $ \phi^*(y)$ is the
density of $X^*_n$. Define  $S^*_n=X^*_1+\ldots+X^*_n$ to be the corresponding random walk and   $C^*_t$  the compound Poisson process given by
\[ C^*_t=S^*_{N_{t}}.\]

Notice that the distribution $\mu_t$  of $C_t$ is not absolutely continuous with respect to  Lebesgue measure. However, if $C_0=x$  we have the following representation 

\begin{equation} \label{densityC}
\mu_t= P\left[\,N_t=0\,\right] \delta_{x} 
+ \sum_{k=1}^{\infty} P\left[\,N_t=k\,\right]\,\mu_{k}(x),
\end{equation}
with $\mu_k$ the distribution of $S_k$.  That is, 
\begin{eqnarray*}
E^{x}\left[\,f\left(\,S_k\,\right)\,\right]&=&\int_{\bR^d} f(x+y) d\mu_k(y)\\
&=& \int_{\bR^d} \ldots \int_{\bR^d} \,f\left(\,\sum_{j=0}^k x_j\,\right) \,\prod_{i=1}^k \phi(x_i)\, dx_1\ldots dx_{k}.
\end{eqnarray*}
Thus if $f$ is a bounded measurable function we have that
\[ f^*(S^*_0)=f^*(0)= \|f\|_{L^\infty}\] 
and the inequality 
\[ f\left(\,S_0+x\,\right)=f(x) \leq f^*(S^*_0),\]
can only be asserted to hold almost everywhere.

The next result is a version of inequality (\ref{fdd0}) for random walks where the functions are only assume to be measurable but the conclusion is only a.e. with respect to the Lebesgue measure. We label it as  ``Theorem" because it may be of some independent interest. 
\begin{theorem} {\label{sn}}
Let $ f_1,\ldots,f_m$ nonnegative functions  and $k_1
\leq \ldots \leq k_m$ nonnegative integers.   Then 
\begin{equation} \label{fddrw}
 E\left[ \,\prod_{i=1}^{m} f_i( x_0+S_{k_i})   \,\right]
\leq
 E\left[ \,\prod_{i=1}^{m} f^*_i( S^*
 _{k_i})   \,\right], 
\end{equation}
almost everywhere in $x_0$, with respect to Lebesgue measure. 
In the case that $ f_1,\ldots,f_m$  are continuous, (\ref{fddrw}) holds pointwise. 
\end{theorem}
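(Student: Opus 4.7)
The plan is to write the expectations on both sides of \eqref{fddrw} as multiple Lebesgue integrals against product densities and reduce the statement to a direct application of the Brascamp--Lieb--Luttinger inequality (Theorem \ref{bll}). Using the independence of $X_1,\ldots,X_{k_m}$ together with $\phi$ being the common density, the left-hand side equals
\begin{equation*}
\int_{\bR^d}\cdots\int_{\bR^d}\prod_{i=1}^{m} f_i\!\left(x_0+\sum_{j=1}^{k_i}x_j\right)\prod_{j=1}^{k_m}\phi(x_j)\,dx_1\cdots dx_{k_m},
\end{equation*}
and, with $\phi^*$ and $f_i^*$ in place of $\phi$ and $f_i$ and $x_0$ replaced by $0$, the right-hand side has an analogous representation.

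To cast this into a form suitable for Theorem \ref{bll}, I absorb the starting point by setting $\tilde f_i(y):=f_i(x_0+y)$. The super-level sets of $\tilde f_i$ are translates of those of $f_i$, so they have the same Lebesgue measure; consequently $\tilde f_i^{\,*}=f_i^{\,*}$. Suppose first that $k_1\geq 1$, so every factor in the integrand depends nontrivially on the integration variables. Then the integrand has the form $\prod_{\ell=1}^{m+k_m}h_\ell\!\left(\sum_{j=1}^{k_m}b_{\ell j}x_j\right)$, where $h_\ell=\tilde f_\ell$ and row $\ell$ of $B$ consists of ones in positions $1,\ldots,k_\ell$ and zeros elsewhere for $\ell\leq m$, while $h_{m+j}=\phi$ with row $m+j$ equal to the $j$-th standard basis vector. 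Theorem \ref{bll} combined with $\tilde f_i^{\,*}=f_i^{\,*}$ then yields
\begin{equation*}
E\!\left[\prod_{i=1}^{m}f_i(x_0+S_{k_i})\right]\leq \int\!\cdots\!\int\prod_{i=1}^{m}f_i^{\,*}\!\left(\sum_{j=1}^{k_i}x_j\right)\prod_{j=1}^{k_m}\phi^{\,*}(x_j)\,dx=E\!\left[\prod_{i=1}^{m}f_i^{\,*}(S^{\,*}_{k_i})\right].
\end{equation*}

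If instead $k_1=\cdots=k_\ell=0$ for some $\ell\geq 1$, then for $i\leq\ell$ the factor $f_i(x_0+S_{k_i})=f_i(x_0)$ is a constant that pulls out of the expectation; applying the previous paragraph to the remaining indices (or, when $\ell=m$, reducing to a triviality) leaves us needing only $f_i(x_0)\leq f_i^{\,*}(0)$ for $i\leq\ell$. From the representation $f_i^{\,*}(x)=\int_0^\infty\chi^{*}_{\{f_i>t\}}(x)\,dt$ one reads off that $f_i^{\,*}(0)$ equals the essential supremum of $f_i$, so this inequality holds for Lebesgue-almost every $x_0$, and pointwise when $f_i$ is continuous (since then $\sup f_i=\operatorname{ess\,sup}f_i$). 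This is the sole source of the ``almost everywhere'' qualification and of the pointwise conclusion under continuity. The only real obstacle is bookkeeping: isolating the degenerate rows of $B$ corresponding to $k_i=0$, which lie outside the pointwise reach of Theorem \ref{bll} for merely measurable $f_i$.
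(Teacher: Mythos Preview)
Your proof is correct and follows essentially the same route as the paper's: write the expectation as a multiple integral in the increments, apply the Brascamp--Lieb--Luttinger inequality (Theorem~\ref{bll}) with the translated functions $\tilde f_i=f_i(x_0+\cdot)$, and use $\tilde f_i^{\,*}=f_i^{\,*}$. You are simply more explicit than the paper about the translation-invariance step and about the degenerate case $k_i=0$, which the paper mentions just before the theorem as the reason the inequality holds only a.e.\ in $x_0$ for merely measurable $f_i$.
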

\begin{proof}
Given that  $X_1, \ldots, X_{k_m}$ are i.i.d we can apply  Theorem \ref{bll}  to obtain   that 

\begin{eqnarray} \nonumber
&&E\left[ \,\prod_{i=1}^{m} f_i( x_0+S_{k_i})   \,\right]\\\nonumber
&=&  E\left[ \,\prod_{i=1}^{m} f_i( x_0+X_1+...+X_{k_i})   \,\right]\\\nonumber
&=&\int_{\bR^d}\ldots \int_{\bR^d} 
\,\left[\prod_{i=1}^m  \,f_i\left(\sum_{j=0}^{k_i} x_j\,\right)\right]\,
\prod_{i=1}^{k_m}  \phi(x_i)\, dx_1\ldots dx_m\\\nonumber
&\leq&\int_{\bR^d}\ldots \int_{\bR^d} 
\,\left[\prod_{i=1}^m  \,f^*_i\left(\sum_{j=1}^{k_i} x_j\right)\right]\,
\prod_{i=1}^{k_m}\phi^*(x_i)\, dx_1\ldots dx_m\\
&=& E\left[ \,\prod_{i=1}^{m} f^*_i(S^*_{k_i})   \,\right].
\end{eqnarray}
\end{proof}

We can now prove the inequality  (\ref{fdd0}) for the compound Poisson process $C_t$ under the assumption that all the domains are $\bR^d$. Let $ f_1,\ldots,f_m$ be nonnegative continuous functions. Since $N_t$ is independent of $S_{k}$ and $S^*_{k}$, we can combine (\ref{dist-ct}) and Theorem \ref{sn} to obtain
\begin{eqnarray}\label{fddct}
&&E^x\left[ \,\prod_{i=1}^{m} f_i( S_{N_{t_i}})\,\right] \nonumber  \\
&=&  \sum_{k_1\leq k_2\leq \ldots \leq k_m}^{\infty}\nonumber
P\left[N_{t_1}=k_1,\ldots,N_{t_m}=k_m\right]\,E\left[\,\prod_{i=1}^{m} f_i(x+ S_{k_i}) \right]\nonumber \\
&\leq&  \sum_{k_1\leq k_2\leq \ldots \leq k_m}^{\infty} P\left[ N_{t_1}=k_1,\ldots,N_{t_m}=k_m \right] E\left[ \,\prod_{i=1}^{m} f^*_i( S^*_{k_i})   \, \right]\nonumber\\
&=&  E^0\left[ \,\prod_{i=1}^{m} f^*_i( S^*_{N_{t_i}})\,\right]. 
\end{eqnarray} 
Thus 
\begin{equation}
E^x\left[ \,\prod_{i=1}^{m} f_i( C_{t_i})\,\right] \leq E^0\left[ \,\prod_{i=1}^{m} f^*_i( C^*_{t_i})\,\right],
\end{equation}
which is desired result.

\section{Symmetrization of  Gaussian  processes}

Let $G_t$  be a  nondegenerate Gaussian process. Then there
exist $b \in \bR^d$ and   a strictly positive definite symmetric
$d \times d$ matrix $\A$ such that the density of $G_t$ is given by
\[ f_{\A,b}(t,x)= \frac{1}{ \left[\,2 t \pi\,\right]^{d/2} \sqrt{ \det  \A}}\,
\exp\left[ \, -\frac{1}{2t}\,\left\langle \, (x-t b), \A^{-1}\cdot (x-t b)\,\right \rangle
\right],\] for all $x \in \bR^d$ and all $t>0$.

Let us first assume that  $b=0$.  Let $ u > 0$, then 
\begin{eqnarray*}
&& \left\{ x \in \bR^d: f_{\A,0}(t,x) >u\right\}\\ &=&
 \left\{ x \in
\bR^d: \langle\, x, \A^{-1} \cdot x\,\rangle\, < t\, \ln\left[
\frac{1}{(2 t \pi)^d u^2 \det \A} \right] \right\}\\
&=& \left\{ x
\in \bR^d: \langle\, \A^{-1/2} \cdot x, \A^{-1/2} \cdot  x\,\rangle\,
< \,t\, \ln\left[ \frac{1}{(2 t \pi)^d u^2 \det \A} \right] \right\}.\nonumber 
\end{eqnarray*}
A change of variables implies that

\begin{equation}\nonumber
m \left\{ x \in \bR^d: f_{\A,0}(t,x) >u\right\} = \frac{1}{ \left[
\det \A \right]^{1/2}} m \left\{\, B( r_{\A,d,u,t})\,\right\},
\end{equation}
where 
$$ r_{\A,d,u,t}= \,t\, \ln\left[ \frac{1}{(2 t \pi)^d u^2 \det \A}
\right].$$

Consider the diagonal matrix
\[\A^*=\,\left( \,\det \A\,\right)^{\frac{1}{d}} I_d.\]
Then
\begin{equation}\nonumber
m \left\{ x \in \bR^d: f_{\A,0}(t,x) >u\right\} =m \left\{ x \in
\bR^d: f_{\A^*,0}(t,x) >u\right\},
\end{equation}
for all $ u > 0 $.
Given that $f_{\A^*,0}(t,x)$ is rotational invariant and radially decreasing, we conclude that  

\begin{equation}\label{gaussian}
\left[ \, f_{\A,b}(t,x) \,\right]^* = \left[ \, f_{\A,0}(t,x-tb)
\,\right]^* =  \, f_{\A^*,0}(t,x).\\
\end{equation}

If $G_t$  is a   degenerate Gaussian process, then 
\begin{equation} 
E\left(\, e^{i \xi \cdot G_t}\, \right)  = \exp\left({i t b\cdot \xi- i\frac{t}{2}\langle \A \cdot \xi, \xi \rangle}\right),
\end{equation}
where $\A$ is a positive definite $d \times d$  matrix such that $\det \A=0$.

Let $\{v_1,\ldots,v_d\}$ be the orthonormal eigenvectors of $\A$ with eigenvalues $\lambda_1, \ldots, \lambda_d$.  We can assume that $\{\lambda_1,\ldots,\lambda_k\}$, $1 \leq k <d$, are   the nonzero eigenvalues  of $\A$. 
Let  $W$  be the subspace spanned  by $v_1,\ldots,v_k$. Then $G_t$ can be identified with a non degenerate Gaussian process in the lower dimension space $W$ and 
\[P^z\left[ \,G_t \in D\,\right]= P^z\left[ \,G_t \in P_W(D)\,\right],\]
where $P_W(D)$ is the projection of $D$ on the space $W$. 

Define  $\A^*$ to be the symmetric positive defined matrix with eigenvectors $ v_1,\ldots,v_d$ such that
\[\A^* v_i= 0, \, k< i \leq d,\]
and
\[\A^* v_i= \lambda v_i,\, 1 \leq i \leq k,\]
where
\[\lambda= \left( \lambda_1\cdots\lambda_k\right)^{1/k}.\]
The arguments of this section imply that 
\begin{eqnarray*}
P^z\left[ \,G_t \in D\,\right]&=& P^z\left[ \,G_t \in P_W(D)\,\right]\\
                              &=&  P^0\left[ \,G^*_t \in D_W^{*} \,\right].
\end{eqnarray*}
where $D_W^{*}$ is the ball in $W$, centered at the origin, with the same $k$-dimension measure as  $P_W(D)$. 
Hence the corresponding symmetrization  for this processes should be done in lower dimensions. 
 
\section{Symmetrization of L\'evy processes: Proof of Theorem \ref{fdd}}

We will now consider  general L\'evy processes whose L\'evy measures are absolutely continuous with respect to the Lebesgue measure. 
Our proof requires  two basic results on symmetrization of functions that are included in the Appendix in \S6.

Recall that under our assumptions 
\begin{equation} \nonumber   E^x\left[\, e^{i \xi \cdot X_t}\,\right]  = e^{-t
\Psi(\xi)+i \xi \cdot x}, \end{equation}
where
\[ \Psi(\xi)= -i \langle b, \xi \rangle + \frac{1}{2}\langle \A \cdot \xi, \xi \rangle +
\int_{\bR^d}\left[\, 1 + i \langle \xi,y \rangle \bI_{B} - e^{i\,\xi \cdot y} \,\right]\,\phi(y)\,
dy, \] 
$B$ is the unit ball centered at the origin and  $\phi$ is such that 
\begin{equation}\int_{\bR^d} \frac{|y|^2}{1+|y|^2}\,\phi(y)\, dy < \infty \,\label{phi}.\end{equation}

Consider the sequence 
\[\phi_n(y)= \phi(y)\,\bI_{\{ t \in \bR: \frac{1}{n} < t \} }(|y|),\] 
and let $\phi_n^*(y)$ be its symmetric decreasing rearrangement. 
Thanks to (\ref{phi}), 
\[c_n= \int_{\bR^d} \,\phi_n(y)\, dy <  \infty \,,\]
and
\[ \int_{B}\, |y_i|\,\phi_n(y) \,dy  < \infty , \, 1\leq i \leq d,\] 
where again $B$ is the unit ball.

Consider $C_{n,t}$ a compound Poisson process  with characteristic function 
\begin{equation}  E\left(\, e^{i \xi \cdot C_{n,t}}\, \right)  = e^{-t
\Psi_{C,n}(\xi)}, \end{equation} 
where 
\begin{equation} \Psi_{C,n}(\xi)= c_n
\int_{\bR^d}\,\left[\, 1-e^{i \xi \cdot y}\, \right] \,
\frac{\phi_n(y)}{c_n}\, dy. \nonumber
\end{equation}

Given that all the eigenvalues of $\A$ are nonnegative,  if $\{\epsilon_n \}_{n=1}^\infty$ is a sequence of positive numbers converging to zero, then  $\A_n=\A+\epsilon_n I_d$ is a sequence of nonnegative  nonsingular  matrices. Let  $G_{n,t}$ be a Gaussian process starting at $x$, independent of $C_{n, t}$, and  associated with the matrix  $\A_n$  and the vector
 $ b_n=  b - \int_{B} y \,\phi_n(y)\, dy.$
Set 
$X_{n,t}=C_{n,t}+ G_{n,t}.$
Since  $C_{n,t}$ and  $G_{n,t}$ are independent, 
\begin{equation} \nonumber   E^x\left[\, e^{i \xi \cdot X_{n,t}}\,\right]  = e^{-t
\Psi_n(\xi)+i \xi \cdot x}, \end{equation}
where
\begin{eqnarray}
\Psi_n(\xi) &=& -i \langle b_n, \xi \rangle  + \frac{1}{2}\langle \A_n \cdot \xi, \xi \rangle +
\int_{\bR^d}\left[\, 1  - e^{i\,\xi \cdot y} \,\right]\,\phi_n(y)\,
dy\\ \nonumber
&=& -i \langle b, \xi \rangle + \frac{1}{2}\langle \A_n \cdot \xi, \xi \rangle +
\int_{\bR^d}\left[\, 1 + i \langle \xi,y \rangle \bI_{B} - e^{i\,\xi \cdot y} \,\right]\,\phi_n(y)\,
dy. \end{eqnarray}

Let $S_{n,k}=X^n_1+\ldots+X^n_k$ be the random walk associated to $C_{n,t}$. If $f_1,\ldots,f_m$ are nonnegative continuous functions and $t_1\leq\ldots\leq t_m$, then
\begin{eqnarray}\label{fddn}
&&E^x\left[\,\prod_{i=1}^{m} f_i( X_{n,t_i})\,\right]\\\nonumber
&=& E^x\left[\,\prod_{i=1}^{m} f_i\left(\, C_{n,t_i}+G_{n,t_i}\,\right)\,\right]\\
&=&\sum_{k_1\leq k_2\leq \ldots \leq k_m}^{\infty}
P\left[N_{t_1}=k_1,\ldots,N_{t_m}=k_m\right]\,E^x\left[\,\prod_{i=1}^{m} f_i\left(\, S_{n,k_i}+G_{n,t_i}\,\right)\,\right].\nonumber 
\end{eqnarray}
Now  Theorem \ref{bll} and equality  (\ref{gaussian}) imply that 
\begin{eqnarray}
&& E^x\left[\,\prod_{i=1}^{m} f_i\left(\,G_{n,t_i}+ \sum_{j=1}^{k_i}X^n_j \,\right)\,\right]\\ \nonumber
&=& \int_{\bR^d}\ldots\int_{\bR^d} \,\prod_{i=1}^{m} f_i\left(\sum_{j=0}^{k_i}x_j \,\right) \,f_{\A_n,b_n}(t,x_0-x)\,\prod_{j=1}^{k_m} \phi(x_j)\,dx_0\ldots dx_{k_m}\\ \nonumber
&\leq& \int_{\bR^d}\ldots\int_{\bR^d} \,\prod_{i=1}^{m} f^*_i\left(\,\sum_{j=0}^{k_i}x_j \,\right) \,f^*_{\A_n,0}(t,x_0)\,\prod_{j=1}^{k_m} \phi^*(x_j)\,dx_0\ldots dx_{k_m}\\ \nonumber
&=& \int_{\bR^d}\ldots\int_{\bR^d} \,\prod_{i=1}^{m} f^*_i\left(\,\sum_{j=0}^{k_i}x_j \,\right) \,f_{\A^*_n,0}(t,x_0)\,\prod_{j=1}^{k_m} \phi^*(x_j)\,dx_0\ldots dx_{k_m}\\ \nonumber
&=&E^x\left[\,\prod_{i=1}^{m} f^*_i\left(\,G^*_{n,t_i}+ S^*_{n,k} \,\right)\,\right].
\end{eqnarray}
This implies that 

\begin{equation}\label{Xn}
E^x\left[\,\prod_{i=1}^{m} f_i( X_{n,t_i})\,
\right] \leq E^0\left[ \,\prod_{i=1}^{m} f^*_i( X
^*_{n,t_i})\,\right].
\end{equation}

Theorem \ref{fdd} will be  a consequence of (\ref{Xn}) and the following result on weak convergence.

\begin{theorem}\label{weak}
Let  $f_1,\ldots,f_k$ be nonnegative bounded continuous functions,  and $0 <t_1<
\ldots < t_m$.  Then for all $ x \in \bR^d$,
\begin{equation}
\lim_{n \to \infty} E^x\left[\,\prod_{i=1}^{k} f_i( X_{n,t_i})\,
\right] = E^x\left[\,\prod_{i=1}^{k} f_i( X_{t_i})\,\right],
\end{equation}
and  
\begin{eqnarray}\nonumber
\lim_{n \to \infty} E^x\left[ \,\prod_{i=1}^{k} f_i( X
^*_{n,t_i})\,\right] = E^x\left[\,\prod_{i=1}^{k} f_i( X^*_{t_i})\,\right].
\end{eqnarray}
\end{theorem}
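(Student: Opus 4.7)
The plan is to reduce each limit to pointwise convergence of the Lévy–Khintchine characteristic exponents $\Psi_n\to\Psi$ and $\Psi_n^*\to\Psi^*$, and then to deduce weak convergence of finite-dimensional distributions via Lévy's continuity theorem. Since $X_{n,t}$ is a Lévy process, independence and stationarity of increments give
\[
E^x\bigl[e^{i\sum_j\xi_j\cdot X_{n,t_j}}\bigr]=\exp\!\Bigl\{i\bigl(\textstyle\sum_j\xi_j\bigr)\cdot x-\sum_{j=1}^{k}(t_j-t_{j-1})\Psi_n(\eta_j)\Bigr\},\qquad \eta_j=\sum_{\ell\ge j}\xi_\ell,
\]
with $t_0=0$, and the analogous formula with $\Psi$ in place of $\Psi_n$ for the limiting process. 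Thus pointwise convergence $\Psi_n\to\Psi$ implies pointwise convergence of the joint characteristic functions of $(X_{n,t_1},\dots,X_{n,t_k})$; Lévy's theorem then gives weak convergence, and since $(y_1,\dots,y_k)\mapsto\prod_i f_i(y_i)$ is bounded and continuous on $\bR^d\times\cdots\times\bR^d$, the expectations converge. The starred statement is handled by the same scheme with $\Psi_n^*$ and $\Psi^*$.

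To verify $\Psi_n(\xi)\to\Psi(\xi)$, I would write
\[
\Psi_n(\xi)-\Psi(\xi)=\tfrac{1}{2}\bigl\langle(\A_n-\A)\xi,\xi\bigr\rangle+\int_{\bR^d}\bigl[1+i\langle\xi,y\rangle\bI_B-e^{i\xi\cdot y}\bigr]\bigl(\phi_n(y)-\phi(y)\bigr)\,dy.
\]
The quadratic term vanishes because $\A_n-\A=\eps_n I_d\to 0$. For the integral, a Taylor expansion of the integrand on $B$ combined with the trivial bound $|1-e^{i\xi\cdot y}|\leq 2$ on $B^c$ gives
\[
\bigl|1+i\langle\xi,y\rangle\bI_B-e^{i\xi\cdot y}\bigr|\leq C(\xi)\,\frac{|y|^2}{1+|y|^2},
\]
and the standing integrability assumption $\int|y|^2(1+|y|^2)^{-1}\phi(y)\,dy<\infty$ together with $\phi_n\uparrow\phi$ almost everywhere yields the limit via dominated convergence.

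For $\Psi_n^*\to\Psi^*$, continuity of the determinant yields $\A_n^*=(\det\A_n)^{1/d}I_d\to(\det\A)^{1/d}I_d=\A^*$, so the Gaussian piece behaves well. The main step is to transfer the monotone convergence $\phi_n\uparrow\phi$ to the rearranged densities, i.e.\ $\phi_n^*\uparrow\phi^*$ almost everywhere; this follows from the symmetrization facts recorded in the Appendix, since the level-set identities $|\{\phi_n>t\}|=|\{\phi_n^*>t\}|$ pass to the monotone limit, and the fact that each $\phi_n^*$ is radial and radially decreasing pins down the pointwise monotone limit. The same bound $|1-\cos(\xi\cdot y)|\leq C(\xi)|y|^2/(1+|y|^2)$, together with the inequality $\int|y|^2(1+|y|^2)^{-1}\phi^*(y)\,dy<\infty$ recorded just after the definition of $\phi^*$ in the introduction, provides a dominating function, and dominated convergence finishes the proof. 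The main obstacle is precisely this interchange of the symmetric decreasing rearrangement with the monotone limit; once it is in hand, everything else is a routine application of dominated convergence and Lévy's continuity theorem.
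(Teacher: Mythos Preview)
Your proposal is correct and follows essentially the same route as the paper: reduce to pointwise convergence $\Psi_n\to\Psi$ and $\Psi_n^*\to\Psi^*$ by dominated convergence (using the $|y|^2/(1+|y|^2)$ domination and, for the starred case, the Appendix facts that $\phi_n^*\le\phi^*$ and $\phi_n^*\to\phi^*$ a.e.), then pass to joint characteristic functions via the increment decomposition and invoke L\'evy's continuity theorem. The only cosmetic difference is that the paper splits the domination into the regions $B$ and $B^c$ explicitly rather than packaging it as a single bound $C(\xi)\,|y|^2/(1+|y|^2)$.
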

\begin{proof}

Notice that for all $ \xi \in  \bR^d$, 
\[ \lim_{n\to \infty} \langle \A_n \cdot \xi, \xi \rangle=\langle \A \cdot \xi, \xi \rangle.\]
Given that there exists $C \in \bR^{+}$ such that, 
\begin{equation}
\left|\, 1 + i \langle \xi , y \rangle  - e^{i\,\xi \cdot y} \,\right|\,\phi_n(y)\, \leq C\,|\xi|^2\,
\, |y|^2 \phi(y)\,
 < \infty,
\end{equation}
for all $y \in B$, and
\begin{equation}
\left|\, 1  - e^{i\,\xi \cdot y} \,\right|\,\phi_n(y)\, \leq 2\, \phi(y)\, < \infty,
\end{equation}
for all $ y \in \bR^d \setminus B$, 
it follows from the Dominated Convergence Theorem that
 
\begin{eqnarray}
&& \lim_{n\to \infty} \Psi_n(\xi) \\ \nonumber
&=& \lim_{n\to \infty} \left(\,-i \langle b, \xi \rangle + \frac{1}{2}\langle \A_n \cdot \xi, \xi \rangle +
\int_{\bR^d}\left[\, 1 + i \langle \xi,y \rangle \bI_{B} - e^{i\,\xi \cdot y} \,\right]\,\phi_n(y)\,
dy\right)\\
&=& \left(\,-i \langle b, \xi \rangle + \frac{1}{2}\langle \A \cdot \xi, \xi \rangle +
\int_{\bR^d}\left[\, 1 + i \langle \xi,y \rangle \bI_{B} - e^{i\,\xi \cdot y} \,\right]\,\phi(y)\,
dy\right).\nonumber
\end{eqnarray}

We conclude that 
\begin{equation}  \label{weakonedimensional}
\lim_{n \to \infty}   E^x\left[\, e^{i \xi \cdot X_{n,t}}\,\right] =
E^x\left[\, e^{i \xi \cdot X_{t}}\,\right].
\end{equation}

On the other hand, using the last two inequalities and the  fact that
\[ \lim_{n\to \infty} \det \A_n = \det \A,\]
we can easily prove that 
\[ \lim_{n\to \infty} \langle \A^*_n \cdot \xi, \xi \rangle=\langle \A^* \cdot \xi, \xi \rangle.\]
Lemma \ref{order} implies that 
\[ \phi^*_n(x) \leq \phi^*(x), \]
for all $x \in \bR$, and all $n \geq 1$. 
In addition, Proposition \ref{limit-th} gives that 
\[\lim_{n \to \infty} \phi^*_{n}=\phi^*, \text{ a.e. }.\]
Thus the same argument used to prove (\ref{weakonedimensional}) yields
\begin{equation}  \label{weakonedimensional*}
\lim_{n \to \infty}   E^x\left[\, e^{i \xi \cdot X_{n,t}^*}\,\right] = E^x\left[\, e^{i \xi \cdot X_{t}^*}\,\right].
\end{equation}
Now, if $\xi_1, \ldots, \xi_m \in \bR^d$, then 
\begin{eqnarray}  \nonumber
\sum_{j=1}^m \xi_j \cdot X_{n,t_j}& =&\left( \xi_1+\ldots+ \xi_m\right) \cdot X_{n,t_1}\\&+&
\sum_{j=2}^{m}  \,( \xi_m+\ldots+\xi_{j})\, \cdot \left(\,X_{n,t_{j}}-X_{n,t_{j-1}}\,\right). 
\end{eqnarray}
Since  $t_1<\ldots < t_m$ we have that 
\begin{eqnarray}  
&&E^x\left\{\,\exp\left[i \,\sum_{j=1}^m \xi_j \cdot X_{n,t_j}\,\right]\,\right\}\\\nonumber
&=&E^x\left\{\,\exp\left[\, i\left(\,\xi_1+\ldots+ \xi_m\,\right) \cdot X_{n,t_1}\right]\,\right\}\\&\times&\nonumber
\prod_{j=2}^{m}  E^0\left\{\,\exp\left[\,i( \xi_m+\ldots+\xi_{j})\, \cdot \left(\,X_{n,t_{j}-t_{j-1}}\,\right)\,\right]\,\right\}.
\end{eqnarray}
The desired result  immediately follows from (\ref{weakonedimensional}), (\ref{weakonedimensional*}) and the fact that our characteristic functions are continuous at $0$. This last observation follows from the L\'evy-Khintchine formula.
\end{proof}

Combining (\ref{Xn}) and Theorem \ref{weak},  we obtain 
\begin{equation}\label{bounded}
E^x\left[\,\prod_{i=1}^{m} f_i( X_{t_i})\,
\right] \leq E^0\left[ \,\prod_{i=1}^{m} f^*_i( X
^*_{t_i})\,\right].
\end{equation}
for all  nonnegative bounded continuous functions  $f_1,\dots,f_m$.

Let  $f$ be a nonnegative  continuous functions, and  consider the sequence  
\[ f_{n}= \max\{ f,n\}.\]
Then 
\[ 0 \leq f_{n}(x) \leq f_{n+1}(x) \leq f(x), \text{ for all }  x \in \bR^d.\] 
Thus Proposition \ref{order} implies that 
\[ 0 \leq f^*_{n}(x) \leq f^*_{n+1}(x) , \text{ for all }  x \in \bR^d.\] 
Since $f^*$ is continuous, Proposition \ref{limit-th} yield 
\[ \lim_{n \to \infty} f^*_{n}(x)=f^*(x) , \text{ for all }  x \in \bR^d.\] 
The Monotone  Convergence Theorem for the laws of $X_t$ and $X_t^*$ imply (\ref{bounded}) for all nonnegative continuous functions.

To finish the proof of Theorem \ref{fdd}, we must show that we may replace a continuous function $f_i$ by the indicator (characteristic) function of a domain of finite volume.  
Let $O$ be a open set of finite volume and consider
\[ \psi_n\left(\,x\,\right)= 1- \left(\, 1-n d(x,F)\,\right)_{+}, \text{ where } F=\bR^d\setminus O.\] 
Notice that  $\psi_n(x)=0$  if $ x \in F$, and  $\psi_n(x)=1$, if  $d(x,\bR^d\setminus O) \geq \frac{1}{n}$.
In addition, 
\[ 0 \leq \psi_n(x) \leq \psi_{n+1}(x) \leq 1, \text{ for all }  x \in \bR^d.\] 
By Proposition \ref{order},  
\[ 0 \leq \psi^*_n(x) \leq \psi^*_{n+1}(x) \leq 1, \text{ for all }  x \in \bR^d.\] 
Therefore
\[ \psi_n^*(x)= \int_0^\infty  \bI_{\left\{ \psi_n^* > t\right\}}(x)\, dt=\int_0^1  \bI_{\left\{ \psi_n > t\right\}^*}(x) \,dt.\]

Let $O_n= \left\{ x : d(x,\bR^d\setminus O) > \frac{1}{n}\right\}$, $O^*=B(0,r)$, and  $O_n^*=B(0,r_n)$. If $ 0< t < 1$, then 

\[ m\left\{ \psi_n > t\right\}^*=m\left\{ \psi_n > t\right\} > m\left\{ O_n^*\right\}.\]
Hence, for all $x$,  
\[\bI_{ B(0,r_n)}(x) <  \bI_{\left\{ \psi_n > t\right\}^*}(x) < \bI_{ B(0,r)}(x).\]
Integrating in $t$ we obtain 
\[ \bI_{ B(0,r_n)}(x) \leq \psi_n^*(x) \leq \bI_{ B(0,r)}(x).\]
We conclude that 
\[\lim_{n \to \infty}  \psi_n^*(x)= \bI_{ B(0,r)}(x), \text{ for all } x \in \bR,\]
and Theorem \ref{fdd} follows from the  Monotone Convergence Theorem. \qed
\vskip.5cm

Now we will prove Theorem \ref{fdd1}. Without lost of generality we can assume that the functions 
$f_1,\ldots,f_m$ are finite almost everywhere with respect to the Lebesgue measure.
Let $ 1 \leq i \leq m$. We first assume that  there exists a constant $M_i$
\[ f_i(x) \leq M_i, \]
for all $x \in \bR$.  Then there exists a sequence of nonnegative continuous functions  $\{\phi_n\}_{n=1}^\infty$ such that
\[ \lim_{n \to \infty} \phi_n= f_i,\] 
almost everywhere with respect to the Lebesgue measure, and 
\[ \phi_n(x) \leq M_i, \]
for all $x \in \bR$.
By proposition 6.2  
\[ \lim_{n \to \infty} \phi^*_n= f_i^*,\]
almost everywhere with respect to the Lebesgue measure. Thus 
the absolute continuity of the laws of $X_{t_i}$ and $X_{t_i}^*$ with respect to the Lebesgue measure and the Dominated Convergence Theorem yield (\ref{bounded}) if  the functions 
$f_1,\ldots,f_m$ are bounded.

Finally, if   $f_i$ is not bounded, consider the sequence 
\[ f_n= \max\{ f_i ,n\}.\]
As before, Proposition \ref{order} and Proposition \ref{limit-th} imply 

\[  f^*_n \leq  f^*_{n+1}\leq f_i^*\text{ for all } x \in \bR,\]
and 
\[\lim_{n \to \infty} f^*_n = f_i^*,\]
almost everywhere with respect to the Lebesgue measure. As before, Theorem \ref{fdd1} 
follows from the absolute continuity of the laws of $X_{t_i}$ and $X_{t_i}^*$ with respect to the Lebesgue measure and the Monotone Convergence Theorem.

\section{Some Applications}

In this section we give several applications of Theorem \ref{fdd}, we begin with the proof of Theorem \ref{levy}. Recall that 
\[ \tau_{D}^X=\inf\left\{ t>0: X_t \notin D\right\}\] 
is the first exit time of $X_t$ from a domain $D$. Let $D_k$ be a sequence of bounded domains with smooth boundaries such that $ \overline{D_k} \subset D_{k+1}$, and $\cup_{k=1}^{\infty}D_k=D$.  Since any L\'evy process has a version  with right continuous paths, we have 
\begin{eqnarray}\label{0fdd}
&&E^{z_0}\left\{\, f(X_t)\,\exp\left( -\int_0^t V(X_s)ds\,\right);\, \tau^X_D>t \,\right\}\\ \nonumber
&=&E^{z_0}\left\{ \,f(X_t)\,\exp\left( -\int_0^t V(X_s)ds\,\right);\, X_{s} \in D, \,\forall s \in  [0, t]\,\right\}\\ \nonumber
&=&\lim_{m \to \infty}\lim_{k \to \infty} E^{z_0}
\left\{\,f(X_t)\,\exp\left(-\frac{t}{m}\sum_{i=1}^{m}V(X_{\frac{it}{m}})\,\right)
;\,X_{\frac{it}{m}} \in D_k,\,i= 1,\ldots,m \,
\right\}\\ \nonumber
&=&\lim_{m \to \infty}\lim_{k \to \infty} E^{z_0}
\left\{\,f(X_t)\,\prod_{i=1}^{m} \exp\left(-\frac{t}{m} V(X_{\frac{it}{m}})\,\right)\bI_{D_k}\left(X_{\frac{it}{m}}\right)
\right\}.
\end{eqnarray}
Since 
\[\left[\, \exp\left(-s V(x)\,\right)\,\right]^*= \exp\left(-s V^*(x)\,\right),\]
for all $s>0$ and all $ x \in \bR^d$, Theorem \ref{fdd} implies that 
\begin{eqnarray*}
&&E^{z_0}\left\{\,f(X_t)\,\prod_{i=1}^{m} \exp\left(-\frac{t}{m} V(X_{\frac{it}{m}})\,\right)\bI_{D_k}(X_{\frac{it}{m}})\right\}\\
&\leq&  E^{0}\left\{\,f^*(X^*_t)\,\prod_{i=1}^{m} \exp\left(-\frac{t}{m} V^*(X^*_{\frac{it}{m}})\,\right)\bI_{D^*_k}(X^*_{\frac{it}{m}})\right\}. \nonumber
\end{eqnarray*}
Hence we have the following 
\begin{eqnarray}\label{exit1}
&&E^{z}\Big\{\, f(X_t)\,\exp\left( -\int_0^t V(X_s)ds\,\right);\,\tau_D^X>t \,\Big\}\\
&\leq & E^{0}\Big\{\, f^*(X^*_t)\,\exp\left( -\int_0^t V^*(X^*_s)ds\,\right);\,\tau_{D^*}^{X^*}>t \,\Big\},\nonumber
\end{eqnarray}
which is Theorem (\ref{levy}).  Taking $V=0$ and $f=1$, gives 
\begin{equation}
P^{z}\Big\{\, \tau_D^X>t \,\Big\}
 \leq P^{0}\Big\{\,\tau_{D^*}^{X^*}>t \,\Big\}, 
\end{equation} 
which is a generalization of  inequality (\ref{exitbrownian}). 
Integrating this inequality with respect to $t$ gives the following result. 
\begin{corollary}
If $\psi$ is a nonnegative increasing
function,  then 
\begin{equation} E^z\left[\,\psi\left(\,\tau_{D}^X\,\right)\,\right]\,\leq
E^0\left[\,\psi\left(\,\tau_{D^*}^{X^*}   \right)\,\right],\end{equation}   
 for all  $ z \in D$. 
 In particular
\begin{equation} E^z\left[\,\left(\,\tau_{D}^X\,\right)^p\,\right]\,\leq
E^0\left[\,\left(\,\tau_{D^*}^{X^*}\,\right)^p\,\right],\end{equation} for all $0<p<\infty$.
\end{corollary}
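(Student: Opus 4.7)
The plan is to reduce the inequality to the tail estimate that is already in hand and then invoke a layer-cake representation of $E[\psi(T)]$ for a nonnegative random variable $T$.

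First I would record the tail inequality already obtained as a special case of Theorem \ref{levy}: taking $f\equiv 1$ and $V\equiv 0$ in \eqref{exit1} yields
\begin{equation*}
P^{z}\{\tau_D^X>t\}\;\leq\;P^{0}\{\tau_{D^*}^{X^*}>t\},
\qquad t>0,\ z\in D,
\end{equation*}
which is precisely the displayed inequality just above the corollary. This is the only probabilistic input needed; everything that follows is a deterministic manipulation of the distribution functions.

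Next I would pass from the tail inequality to the integrated statement using Fubini. Without loss of generality take $\psi$ right-continuous and nondecreasing so that it defines a positive Borel measure $d\psi$ on $[0,\infty)$; for any nonnegative random variable $T$ one has the Lebesgue–Stieltjes identity
\begin{equation*}
\psi(T)-\psi(0)\;=\;\int_{0}^{\infty}\mathbf{1}_{\{T>t\}}\,d\psi(t),
\end{equation*}
so by Fubini
\begin{equation*}
E[\psi(T)]\;=\;\psi(0)+\int_{0}^{\infty}P\{T>t\}\,d\psi(t).
\end{equation*}
Applying this to $T=\tau_D^X$ under $P^z$ and to $T=\tau_{D^*}^{X^*}$ under $P^0$, and inserting the tail inequality inside the $d\psi$-integral, gives
\begin{equation*}
E^{z}[\psi(\tau_D^X)]-\psi(0)
\;\leq\;\int_{0}^{\infty}P^{0}\{\tau_{D^*}^{X^*}>t\}\,d\psi(t)
\;=\;E^{0}[\psi(\tau_{D^*}^{X^*})]-\psi(0),
\end{equation*}
and the constant $\psi(0)$ cancels to produce the asserted bound.

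For the second display I would simply specialize to $\psi(t)=t^p$ with $0<p<\infty$, for which $\psi(0)=0$ and $d\psi(t)=p\,t^{p-1}\,dt$, and the preceding argument immediately yields $E^z[(\tau_D^X)^p]\leq E^0[(\tau_{D^*}^{X^*})^p]$. There is essentially no main obstacle here: the work has already been done in Theorem \ref{levy}, and the only minor point worth checking is the right-continuous normalization of $\psi$, which is harmless because right- and left-continuous modifications of a nondecreasing $\psi$ differ on a countable set and produce the same Lebesgue–Stieltjes integral against the continuous tail functions $t\mapsto P\{T>t\}$ (and if one wishes to avoid even this, an approximation of $\psi$ by continuous nondecreasing $\psi_n\uparrow\psi$ together with monotone convergence gives the same conclusion).
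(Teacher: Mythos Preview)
Your argument is correct and is precisely what the paper does: it records the tail inequality $P^{z}\{\tau_D^X>t\}\leq P^{0}\{\tau_{D^*}^{X^*}>t\}$ and then says only ``Integrating this inequality with respect to $t$ gives the following result,'' which is exactly the layer-cake computation you have spelled out in detail. Your treatment of the right-continuity normalization of $\psi$ is a nice extra care that the paper omits.
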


Our results imply many isoperimetric inequalities for the potentials and the eigenvalues  of Schr\"odinger operators of the form
\[H^X_{D,V}=H^X_D + V,\]
where $H^X_D$ is the pseudo differential operator associated to $X_t$ with Dirichlet Boundary conditions on $D$. For the convenience of the reader we will give a brief description of the operators and semigroups associated to L\'evy processes.

For purposes of our formulae below we define the  Fourier transform of an $L^2(\R^d)$ function as 
\[\widehat{f}(\xi)=\frac{1}{(2\pi)^{d/2}}\int_{\bR^d}e^{-ix\cdot \xi} f(x) \,dx,\]
with 
\[f(x)=\frac{1}{(2\pi)^{d/2}}\int_{\bR^d}e^{ix\cdot \xi}\, \widehat{f}(\xi)\, d\xi.\]
We define the semigroup associated to the  L\'evy process $X_t$  by 
\begin{eqnarray*}
T_t f(x)&=&E^x [\,f(X_t)\,]\\
&=&\frac{1}{(2\pi)^{d/2}}\,E^0 \left[\,\int_{\bR^d}e^{i(X_t+x)\cdot\xi}\,\widehat{f}(\xi)\,d\xi\,\right]\\
&=&\frac{1}{(2\pi)^{d/2}} \int_{\bR^d}\,e^{ix\cdot\xi}\, E^0 \left[\,e^{iX_t\cdot\xi}\,\right]\widehat{f}(\xi)\,d\xi \\
&=&\frac{1}{(2\pi)^{d/2}}\int_{\bR^d}\,e^{ix\cdot\xi}\,e^{-t\Psi(\xi)}\,\widehat{f}(\xi)\,d\xi.
\end{eqnarray*}
This  semigroup takes $C_0(\bR^d)$ into itself. That is, it is a Feller semigroup.  From this we see that, at least formally for 
$f\in \mathcal{S}(\bR^d)$, the infinitesimal generator is 
\begin{eqnarray*}
H^X f(x)&=&-\frac{\partial T_t f(x)}{\partial t}\Big|_{t=0}
=\frac{1}{(2\pi)^{d/2}}\int_{\bR^d}\,
e^{ix\cdot\xi}\,\Psi(\xi)\,\hat f(\xi)\,d\xi. 
\end {eqnarray*}
Then the L\'evy-Khintchine  formula implies that the operator associated to $X_t$ is given by 
\begin{eqnarray}\label{generator}
H^X f(x)&=& \sum_{j=1}^{d} b_j \partial_jf(x) - \frac{1}{2}\sum_{j, k=1}^d a_{j k} \partial_j\partial_k f(x)\\
&+& \int_{\bR^d} \Big[\,f(x+y)-f(x)-y\cdot\nabla f(x)\, \bI_{\{|y|<1\}}\,\Big] d\nu(y),\nonumber
\end{eqnarray}
where $a_{jk}$ are the entries of the matrix $\A$.  
For instance:
\begin{enumerate}
\item If $X_t$ is a standard Brownian motion: 
$$H^X f=-\frac{1}{2}\Delta f.$$
\item If $X_t$ is a symmetric stable processes of order $0<\alpha<2$:  
$$H^X f=-\left(-\frac{1}{2}\Delta\right)^{\alpha/2}f.$$
\item If $X_t$ is a Poisson process of intensity $c$: $$H^X f(x)=c\Big[f(x+1)-f(x)\Big].$$
\item If $X_t$ is a compound Poisson process with measure $\nu$ and $c=1$:  
\begin{equation*}
H^X f(x) = \int \,\left[\,f(x+y)-f(x)\,\right]\,d\nu(y).
\end{equation*}
\end{enumerate}

In this paper we are interested not on the ``free" semigroup for $X_t$ but rather on its ``killed" semigroup and its perturbation by the potential $V$.  That is, we want properties of the semigroup 

\begin{equation} T_{t}^{D,V}\,f(z) = E^{z}\Big\{\, f(X_t)\,\exp\left( -\int_0^t V(X_s)ds\,\right);\,\tau_D^X>t \,\Big\},\end{equation}
defined for $t>0$, $ z \in D$,  and $f\in L^2(D)$. Recall our assumption that $V$ is nonnegative and continuous.  Thus,
\begin{eqnarray}\label{Vdomination}
|T_{t}^{D,V}\,f(z)|& =& \Big|E^{z}\Big\{\, f(X_t)\,\exp\left( -\int_0^t V(X_s)ds\,\right);\,\tau_D^X>t \,\Big\}\Big| \nonumber\\
&\leq &E^{z}\Big\{\, |f(X_t)|\,;\,\tau_D^X>t \,\Big\}=T_{t}^{D}|f|(z).
\end{eqnarray}

For the rest of the paper we shall assume  that the 
distributions of $X_t$ and $X_t^*$ have densities 
$p^X(t, z,w)$ and $p^{X^*}(t, z,w)$, respectively, which are continuous in both $z$ and $w$ for all $t>0$.  
The killed semigroup has a heat kernel  $p_{D,V}^{X}(t,z, w)$ satisfying 
\begin{equation}T_{t}^{D,V}f(z) = \int_{D} \,p_{D,V}^{X}(t,z, w) f(w) \, dw.\end{equation}
Inequality (\ref{exit1}) is equivalent to 
\begin{equation} \label{integral}\int_D \,f(w)\, p_{D,V}^{ X}(t,z, w)\,dw \leq
\int_{D^*}\,f^*(w)\,p_{D^*,V^*}^{X^*}(t,0, w)\,dw, \end{equation} 
for all $z\in D$ and all $t>0$, and this in fact holds for all nonnegative Borel functions $f$ by Theorem \ref{fdd1}.   Since $f$ is arbitrary, the continuity assumption of the kernels together with (\ref{Vdomination}) gives that for all $z, w\in D$, 
\begin{equation}
p^{X}_{D,V}(t,z,w)\leq  p^{X^*}_{D^*,V^*}(t,0,0)\leq p^{X^*}_{D^*}(t,0,0)<\infty \label{kernelsinequality}. 
\end{equation}
 If in addition $X_t$ is transient, we can integrate  (\ref{integral}) in time to  obtain the following 
 isoperimetric inequality for the potentials associated to $X_t$ and $X^*_t$.
\begin{corollary} Suppose both $X_t$ and $X_t^*$ are transient and have  continuous densities for all $t>0$. Then for all $z\in D$, 
\begin{equation}\label{green}
\int_D \,f(w) G_{D,V}^{ X}(z, w)\,dw \leq \int_{D^*}\,f^{*}(w) G_{D^*,V^*}^{ X^*}(0,w)\,dw, 
\end{equation}
where $G_{D,V}^{ X}(z, w)$ and $G_{D^*,V^*}^{ X^*}(0,w)$ are the Green's functions corresponding to $X_t$ and $X_t^*$, respectively.  
\end{corollary}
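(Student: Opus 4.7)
The plan is to derive the corollary directly from the kernel inequality (\ref{integral}) by integrating in time and invoking Tonelli's theorem, with transience supplying the finiteness needed to make the resulting expressions well-defined.

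First I would fix $z\in D$ and a nonnegative Borel function $f$. By Theorem \ref{fdd1} (applied with $m=1$ to handle the time-slice marginal of $X_t^*$, then combined with the Trotter-product argument of \S5 used to prove (\ref{exit1}) and thence (\ref{integral})), the inequality
\begin{equation*}
\int_D f(w)\,p_{D,V}^{X}(t,z,w)\,dw \leq \int_{D^*} f^{*}(w)\,p_{D^*,V^*}^{X^*}(t,0,w)\,dw
\end{equation*}
holds for every $t>0$. Integrating this in $t$ from $0$ to $\infty$, both sides are integrals of nonnegative functions, so Tonelli's theorem allows the order of integration to be exchanged on each side.

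Next I would identify the time-integrated heat kernels with Green's functions. By definition, for the killed and Feynman--Kac perturbed semigroup we have
\begin{equation*}
G_{D,V}^{X}(z,w)=\int_0^{\infty} p_{D,V}^{X}(t,z,w)\,dt,
\end{equation*}
and analogously for the symmetrized process. Swapping the order of integration on the left gives $\int_D f(w)\,G_{D,V}^{X}(z,w)\,dw$, while on the right one obtains $\int_{D^*} f^{*}(w)\,G_{D^*,V^*}^{X^*}(0,w)\,dw$. This yields the desired inequality (\ref{green}).

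The one point requiring care is that these quantities be finite (or at least that $+\infty \leq +\infty$ carries meaningful content). Here transience of $X_t^*$ together with the bound (\ref{kernelsinequality}), namely $p^{X^*}_{D^*,V^*}(t,0,0)\leq p^{X^*}_{D^*}(t,0,0)$, plus standard integrability estimates for the Green's function of a transient L\'evy process on a set of finite Lebesgue measure, ensures that $G_{D^*,V^*}^{X^*}(0,w)$ is locally integrable away from $w=0$ and the right-hand side makes sense (finite whenever $f^*$ is suitably integrable; otherwise the inequality is trivial). The transience of $X_t$ likewise makes $G_{D,V}^X(z,w)$ well-defined. No genuine obstacle arises beyond verifying these integrability points and noting that (\ref{integral}), originally stated for continuous $f$, extends to nonnegative Borel $f$ via Theorem \ref{fdd1} applied to the killed semigroup, which is what permits the corollary to be stated for Borel $f$.
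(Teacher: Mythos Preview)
Your proposal is correct and follows exactly the paper's approach: the authors simply state that one integrates (\ref{integral}) in $t$ to obtain (\ref{green}), with transience ensuring the Green's functions are well-defined. Your discussion of Tonelli and the finiteness considerations merely spells out what the paper leaves implicit.
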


By inequalities (\ref{integral}), (\ref{green}), and Proposition 2.1 in \cite{alvino} (see also page 671 of \cite{BS}), we have 
\begin{corollary} Suppose both $X_t$ and $X_t^*$ are symmetric, transient and have continuous densities for all $t>0$. Then for all increasing convex functions $\Phi:
\bR^+ \to \bR^+$,   
\begin{equation} 
\int_D \Phi \big(\,p^{X}_{D,V}(t,z,w) \,\big)dw \leq
\int_{D^*} \Phi \big(\,p^{X^*}_{D^*,V^*}(t,w,0)\,\big)dw,
\end{equation}
and
\begin{equation}
\int_D \Phi \big(\,G^{X}_{D,V}(z,w) \,\big)dw \leq
\int_{D^*} \Phi \big(\,G^{X^*}_{D^*,V^*}(w,0)\,\big)dw, 
\end{equation}
for all  $z \in D$, $t>0$.  
\end{corollary}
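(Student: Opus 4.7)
The plan is to deduce both inequalities from the scalar integral estimates (\ref{integral}) and (\ref{green}) by passing to a Hardy--Littlewood--P\'olya weak majorization and then invoking Proposition 2.1 of \cite{alvino} on convex functionals of comparable functions.

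For the heat-kernel statement, fix $z \in D$ and $t>0$, and write $u(w) = p^{X}_{D,V}(t,z,w)$ on $D$, $v(w) = p^{X^*}_{D^*,V^*}(t,0,w)$ on $D^*$. Both are nonnegative, uniformly bounded by (\ref{kernelsinequality}), and in $L^1$ since $\int_D u\,dw = T^{D,V}_{t}\mathbf{1}(z)\le 1$ and similarly for $v$. By Theorem \ref{fdd1}, the inequality (\ref{integral}) holds for every nonnegative Borel $f$; applying it with $f = \bI_{\{u>s\}}$, whose symmetric decreasing rearrangement is $\bI_{B(0,r_s)}$ with $|B(0,r_s)| = |\{u>s\}|$, yields
\[
\int_{\{u>s\}} u(w)\,dw \;\le\; \int_{B(0,r_s)} v(w)\,dw \qquad \text{for all } s>0.
\]
By the layer-cake formula the left side equals $\int_0^{\mu(s)} u^{\#}(\tau)\,d\tau$ with $\mu(s)=|\{u>s\}|$, while the right side is bounded above by $\int_0^{\mu(s)} v^{\#}(\tau)\,d\tau$, where $u^{\#},v^{\#}$ are one-dimensional decreasing rearrangements. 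Letting $s$ vary so that $\mu(s)$ ranges over a dense subset of $(0,|D|)$, and extending trivially where $u^{\#}$ vanishes, one obtains the weak Hardy--Littlewood--P\'olya majorization
\[
\int_0^{r} u^{\#}(\tau)\,d\tau \;\le\; \int_0^{r} v^{\#}(\tau)\,d\tau \qquad \text{for all } r\in(0,|D|).
\]

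This majorization is precisely the hypothesis of Proposition 2.1 in \cite{alvino} (see also p.~671 of \cite{BS}), which concludes $\int_D \Phi(u)\,dw \le \int_{D^*} \Phi(v^*)\,dw$ for every increasing convex $\Phi\colon \bR^{+}\to\bR^{+}$ with $\Phi(0)=0$. The restriction $\Phi(0)=0$ is removed by subtracting $\Phi(0)|D|=\Phi(0)|D^*|$ from both sides (using $|D|=|D^*|$), and the passage from $v^*$ back to $v$ on the right is free by equimeasurability. The Green-function inequality is obtained by the identical procedure, starting from (\ref{green}) in place of (\ref{integral}); transience of $X_t$ and $X_t^*$ guarantees that $G^{X}_{D,V}(z,\cdot)\in L^1(D)$ and $G^{X^*}_{D^*,V^*}(0,\cdot)\in L^1(D^*)$ so that the layer-cake and rearrangement manipulations are legitimate.

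The conceptual content is entirely contained in the passage from a single-integral rearrangement bound to a weak majorization via the Hardy--Littlewood inequality applied to characteristic functions of super-level sets; once this is done, the convex-functional inequality is a black-box application of Alvino's proposition. The only place where care is required is integrability of the kernels, which is handled by (\ref{kernelsinequality}) in the heat-kernel case and by transience in the Green-function case; the symmetry of $X_t$ and $X_t^*$ serves to ensure that these kernels are well-defined symmetric functions of their spatial arguments, so that the $w$-slice fixed by $z$ (respectively $0$) is a bona fide measurable function on $D$ (respectively $D^*$).
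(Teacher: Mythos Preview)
Your proposal is correct and follows the same route as the paper: the paper's proof consists solely of the sentence ``By inequalities (\ref{integral}), (\ref{green}), and Proposition 2.1 in \cite{alvino} (see also page 671 of \cite{BS}),'' and you have simply supplied the natural intermediate step---testing (\ref{integral}) against indicators of super-level sets and using Hardy--Littlewood to obtain the weak majorization $\int_0^r u^{\#}\le\int_0^r v^{\#}$---that verifies the hypothesis of Alvino's proposition. One minor remark: the claim that transience alone forces $G^{X}_{D,V}(z,\cdot)\in L^1(D)$ deserves a word of justification (it amounts to $E^{z}[\tau_D^X]<\infty$), but the paper itself leaves this implicit, so this is not a gap relative to the original.
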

These Corollaries extend several results in C. Bandle \cite{bandle}, see for example  page 214.

The heat kernel $p^{X}_{D,V}(t,z,w)$ can also be represented in terms of the multidimensional distributions.
One easily proves, see \cite{mendez}, that
\begin{eqnarray}\label{bridge} && p^{X}_{D,V}(t,z,w) \\  &=& p^{X}(t,z,w) \,
E^{z}\left\{\,\exp\left[-\int_0^t V(X_s)ds\,\right];\tau_{D}^X>t\, \bigg{|} \,X_t=w\,\right\}.\nonumber
\end{eqnarray}
If $0=t_0< t_1< \ldots < t_m < t$, the conditional finite dimensional distribution
\[P^{z_0}\left\{ X_{t_{1}} \in dz_1,\ldots, X_{t_{m}} \in dz_m \, \bigg{|} \,X_t=w\,\right\},\] 
is given by
\[ \frac{p^X(t-t_{m},z_{m},w)}{ p^X(t,z_{0},w)}\prod_{i=1}^{m}p^X(t_i-t_{i-1},z_{i},z_{i-1}) \,dz_1\ldots dz_m.\]
Combining (\ref{bridge})  with the arguments used in (\ref{0fdd}) we have that
\begin{eqnarray}\label{heat}
&& p^{X}_{D,V}(t,z,w) \\\nonumber
&=& \lim_{m \to \infty}\lim_{k \to \infty} \int_{D_k}\cdots \int_{D_k}\,e^{-\frac{t}{m}\sum_{i=1}^{m}V(X_{\frac{it}{m}})}\, \prod_{i=1}^{m+1}\,p^X\left(\frac{t}{m},z_{i},z_{i-1}
\right)\, \,dz_{1}\cdots dz_{m},
\end{eqnarray}
where $z_0=z$ and $z_{m+1}=w$. 

The proof of  Theorem \ref{fdd} can be adapted to obtain
\begin{equation}\label{trace} \int_D p_{D, V}^{ X}(t,w, w)\,dw \leq
\int_{D^*}p_{D^*, V^*}^{X^*}(t,w, w)\,dw < \infty,  \end{equation}
where the last inequality follows from (\ref{kernelsinequality}) and the fact that $|D^*|<\infty$. 
That is, the trace of the Schr\"odinger semigroup for $H^X_{D,V}$ is maximized 
by the trace of the Schr\"odinger semigroup $H^{X^*}_{D^*,V^*}$.

As explain in \cite{vandenBerg}, the amount of heat contained in the domain 
$D$ at time $t$, when $D$ has temperature 1 at $t=0$ and the
boundary of $D$ is kept at temperature $0$ at all times, is given
by
\[ Q_t(D)= \int_D \int_D p^{B}_{D}(t,z,w)\,dz\,dw,\]
where $B$ is a Brownian motion.   Also  the torsional
rigidity of $D$ is given by
\[\int_{0}^{\infty} Q_t(D)dt=\int_D \int_D G^B_{D}(z,w)\,dz\,dw.\]
Using the representation (\ref{heat}), we obtain the following results for
the heat content and torsional rigidity of L\'evy processes.
\begin{corollary}
Suppose both $X_t$ and $X_t^*$ are transient and have continuous densities for all $t>0$.  Then for all $z\in D$ and $t>0$,
\begin{equation}
\int_D \int_D p^{X}_{D,V}(t,z,w)\,dz\,dw  \leq
\int_{D^*}\int_{D^*} p^{X^*}_{D^*,V^*}(t,z,w)\,dz\,dw,
\label{heatcontent}
\end{equation}
 and
\begin{equation}
\int_D \int_D G^{X}_{D,V}(z,w)\,dz\,dw  \leq
\int_{D^*}\int_{D^*} G^{X^*}_{D^*,V^*}(z,w)\,dz\,dw.
\label{torsional}
\end{equation}
\end{corollary}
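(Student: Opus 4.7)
The plan is to adapt the proof of Theorem \ref{fdd}, now treating both the initial point $z$ and the terminal point $w$ as additional integration variables rather than fixed parameters. The heat content inequality (\ref{heatcontent}) will follow from a direct application of Brascamp--Lieb--Luttinger (Theorem \ref{bll}) with these two extra variables, combined with the weak convergence of Section~4. The torsional rigidity (\ref{torsional}) will then come from (\ref{heatcontent}) by integrating in $t$, applying Fubini, and using the identity $\int_0^\infty p^X_{D,V}(t,z,w)\,dt = G^X_{D,V}(z,w)$, which is finite by transience.

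For the approximating process $X_{n,t} = G_{n,t}+C_{n,t}$ from Section~3, I start from the representation (\ref{heat}). Integrating over $z, w \in D$, exchanging orders of integration, and approximating $\bI_D$ by $\bI_{D_k}$ on the outer variables as well, I obtain
\[
\int_D\!\int_D p^{X_n}_{D,V}(t,z,w)\,dz\,dw \,=\, \lim_m \lim_k \int_{(\bR^d)^{m+2}}\! \prod_{i=0}^{m+1}\bI_{D_k}(z_i) \prod_{i=1}^{m} e^{-\frac{t}{m}V(z_i)}\prod_{i=1}^{m+1}\! p^{X_n}\!\left(\tfrac{t}{m},z_i,z_{i-1}\right) dz_0 \cdots dz_{m+1}.
\]
Expanding each transition density $p^{X_n}(t/m,z_i,z_{i-1})$ as in (\ref{fddn}) via the Poisson count $k_i$ and independent jump increments $u_{i,1},\dots,u_{i,k_i}$, the integrand for each fixed multi-index $(k_1,\dots,k_{m+1})$ becomes a product of nonnegative functions --- indicators $\bI_{D_k}$, exponentials $e^{-\frac{t}{m}V}$, Gaussian densities $f_{\A_n,b_n}(t/m,\cdot)$, and jump densities $\phi_n$ --- each evaluated on a linear combination of the integration variables $\{z_j\}\cup\{u_{i,j}\}$. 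This is precisely the form required by Theorem \ref{bll}.

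Applying BLL and using the rearrangement identities $(\bI_{D_k})^*=\bI_{D_k^*}$, $(e^{-sV})^*=e^{-sV^*}$, and $(f_{\A_n,b_n})^*=f_{\A_n^*,0}$ from Section~3, together with $\phi_n \to \phi_n^*$, gives an upper bound whose integrand matches the symmetrized ingredients. Summing over $(k_1,\dots,k_{m+1})$ and running the Gaussian--jump decomposition backwards reassembles the bound into the analogous multiple integral for $X_n^*$ on $D_k^*$ with potential $V^*$. Taking $k \to \infty$ and $m \to \infty$ gives the heat content inequality for the approximating pair $(X_n, X_n^*)$. The weak convergence Theorem \ref{weak}, together with the continuity assumption on the densities and the uniform bound (\ref{kernelsinequality}), then permits passage $n \to \infty$ by dominated convergence to obtain (\ref{heatcontent}). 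Integrating (\ref{heatcontent}) over $t \in (0,\infty)$ and invoking Fubini with the Green-function identity above yields (\ref{torsional}).

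The main obstacle is the careful bookkeeping in the BLL application: unlike in Theorem \ref{fdd}, the initial and terminal points are integration variables (weighted by $\bI_D$), so one must verify that including $z_0$ and $z_{m+1}$ as extra BLL variables produces a right-hand side that really equals the heat content of $X_n^*$ on $D_k^*$ --- this is where the identity $(\bI_D)^* = \bI_{D^*}$ enters. A secondary technical difficulty is justifying the $n \to \infty$ passage, since weak convergence of finite-dimensional distributions does not directly control the double integral of the killed heat kernel; one must lean on the continuity of the transition densities and the $L^\infty$ bound $p^{X_n^*}_{D^*}(t,0,0)<\infty$ to apply dominated convergence.
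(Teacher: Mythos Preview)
Your approach is essentially the paper's: use the representation (\ref{heat}), treat the endpoints $z_0=z$ and $z_{m+1}=w$ as two additional integration variables carrying the weights $\bI_{D}$, and apply the Brascamp--Lieb--Luttinger machinery of \S4; then integrate in $t$ for (\ref{torsional}). The paper states this in one line (``Using the representation (\ref{heat}), we obtain\ldots'') and leaves the details implicit, so your expansion is a faithful unpacking of that sentence.

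One organizational point is worth flagging. You take the limits in the order $k,m\to\infty$ first (obtaining the heat-content inequality for the approximating pair $(X_n,X_n^*)$) and then $n\to\infty$. As you yourself note, this last step asks for convergence of the \emph{killed} semigroup applied to $\bI_D$, which is a path-level statement not delivered by Theorem~\ref{weak} alone. The cleaner route, and the one implicit in the paper's proof of Theorem~\ref{fdd}, is to reverse the order: for each fixed $m,k$ (and with $\bI_{D_k}$ replaced by a continuous $\psi$ as at the end of \S4), the quantity
\[
\int_{\bR^d}\psi(z_0)\,E^{z_0}\!\left[\prod_{i=1}^{m+1} g_i\bigl(X_{n,\,it/m}\bigr)\right]dz_0
\]
is a \emph{finite-dimensional} functional with bounded continuous test functions, so Theorem~\ref{weak} plus dominated convergence (the integrand is bounded by $\|\psi\|_\infty^{m+2}$ and $|D|<\infty$) gives the $n\to\infty$ passage directly. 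Then let $\psi\uparrow\bI_{D_k}$, $k\to\infty$, $m\to\infty$ exactly as in \S4--\S5. This avoids the Skorokhod-space issue entirely and is almost certainly what the paper intends.
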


We recall  that the semigroup of the process $X_t$ is self-adjoint in $L^2$ if and only if 
the process $X_t$ is symmetric. That is, for any Borel set $A\subset \R^d$, 
\[ P^0\{X_t\in A\}=P^0\{X_t\in -A\}.\] 
In terms of the exponent in the  L\'evy-Khintchine formula this leads to the representation (see \cite{apple})

\[\Psi(\xi)={\frac{1}{2}}\langle \A\cdot\xi, \xi\rangle -\int_{\bR^d}
\left[\,\cos(x\cdot \xi)-1\,\right]\,d\nu(x),\]
where 
$\A$ is a symmetric matrix and $\nu$ is  a symmetric L\'evy measure. That is,  $\left[\nu(A)=\nu(-A)\right]$ for all Borel sets $A$.  
In this case the general theory of Dirichlet forms (see \cite{Da}) guarantees that the Markovian semigroup generated by $X_t$ gives rise to the self-adjoint generator $H^X$.  Recall that $H^X_{V,D}$ is the operator obtained by imposing Dirichlet boundary conditions on $D$ to the Schr\"odinger operator $H^X+V$.  That is, the generator of the  killed semigroup $\{T_t^{D,V}\}_{t \ge 0}$.  By   (\ref{kernelsinequality}) we have that
\begin{eqnarray}\label{finitetrace} \int_D p_{D,V}^{ X}(t,w, w)\,dw &\leq&
\int_{D^*}p_{D^*,V^*}^{X^*}(t,0, 0)\,dw \\ \nonumber 
&=& p_{D^*,V^*}^{X^*}(t,0, 0)\,|D^*|\,< \infty. 
\end{eqnarray}
That is, the semigroup of the killed process has finite trace. 

Whenever  $D$ is of finite volume, the operator $T_t^{D,V}$ maps $L^2(D)$
into $L^{\infty}(D)$ for every $t>0$.  This follows from (\ref{kernelsinequality}) and the general theory of heat semigroups as described on page 59 of  \cite{Da}.  In fact, under these assumptions it follows from 
\cite{Da} that there exists an orthonormal basis of eigenfunctions
$\{\varphi_{D,V,X}^n\}_{n =1}^{\infty}$ for $L^2(D)$ and corresponding eigenvalues $\{\lambda_n ({D,V,X})\}_{n = 1}^{\infty}$ for the semigroup $\{T_t^{D,V}\}_{t \ge 0}$ satisfying
$$0<\lambda_1({D,V,X})<\lambda_2({D,V,X}) \leq \lambda_3({D,V,X})\leq \dots$$
with $\lambda_n({D,V,X}) \to\infty$ as
$n\to\infty$. That is,  the pair $$\{\varphi_{D,V,X}^n, \lambda_n({D,V,X})\}$$ satisfies
\begin{equation*}
T_{t}^{D}\varphi_{D,V,X}^n(z) = e^{-\lambda_n({D,V,X}) t} \,\varphi_{D,V,X}^n(z), \quad z \in D, \,\,\, t > 0.
\end{equation*}
Notice that  $\lambda_n ({D,V,X})$ is a Dirichlet eigenvalue of $H^X+V$ on $D$ with eigenfunction $\varphi_{D,V,X}^n(z)$.
Under such assumptions we have 
\begin{equation}\label{eigenexpan}
p_{D,V}^{X}(t,z, w) = \sum_{n = 1}^{\infty} e^{-\lambda_n({D,V,X}) t}\,\varphi_{D,V,X}^n(z) \,\varphi_{D,V,X}^n(w).
\end{equation}
This eigenfunction expansion for $p_{D,V}^{X}(t,z, w)$ implies  that 
\begin{equation} -\lambda_1({D,V, X})= \lim_{t \to
\infty}{\frac{1}{t}} \log E^{z} \left\{\,\exp\left( -\int_0^t V(X_s)ds\,\right)\,;\,\tau_{D}^X > t\,\right\}, 
\end{equation} for all
domains $D$ of finite volume. This gives the following corollary. 
\begin{corollary}[Faber-Krahn inequality for L\'evy Processes] Suppose both $X_t$ and $X_t^*$ are symmetric, transient and have continuous densities for all $t>0$. Then 
\begin{equation} \lambda_1({D^*,V^*,X^*}) \leq \lambda_1({D,V. 
X}).\label{eigenvalue}
\end{equation}
More generally, we also have the trace inequality 
\[ \sum_{n=1}^\infty e^{-t \lambda_n({D,X, V})} \leq \sum_{n=1}^\infty 
e^{-t \lambda_n({D^*,X^*, V^*})}, \]
valid for all $t>0$.  
\end{corollary}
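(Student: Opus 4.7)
The plan is to deduce both statements from Theorem~\ref{levy}, the eigenfunction expansion~(\ref{eigenexpan}), and the trace inequality~(\ref{trace}), all of which are already established in the paper.

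For the first-eigenvalue inequality, I would specialize Theorem~\ref{levy} to $f \equiv 1$, obtaining, for every $z \in D$ and $t > 0$,
\[ E^z\!\left\{\exp\!\left(-\int_0^t V(X_s)\,ds\right);\tau_D^X > t\right\} \leq E^0\!\left\{\exp\!\left(-\int_0^t V^*(X^*_s)\,ds\right);\tau_{D^*}^{X^*} > t\right\}. \]
Taking logarithms, dividing by $t$, and sending $t \to \infty$, the variational formula for $-\lambda_1$ displayed immediately above the corollary gives $-\lambda_1(D,V,X) \leq -\lambda_1(D^*,V^*,X^*)$, which rearranges to the desired Faber-Krahn inequality. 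Note that the limit on the right-hand side is independent of the starting point (here taken to be $0$), while on the left it is independent of $z\in D$, so the comparison is legitimate.

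For the trace inequality, I would evaluate the eigenfunction expansion~(\ref{eigenexpan}) at $z=w$ and integrate over $D$. Using the $L^2(D)$-orthonormality of $\{\varphi_{D,V,X}^n\}$, termwise integration yields
\[ \int_D p_{D,V}^X(t,w,w)\,dw = \sum_{n=1}^\infty e^{-t\lambda_n(D,V,X)}\int_D \bigl(\varphi_{D,V,X}^n(w)\bigr)^2 dw = \sum_{n=1}^\infty e^{-t\lambda_n(D,V,X)}, \]
and the analogous identity holds on $D^*$. The trace inequality~(\ref{trace}) applied to both sides immediately gives the claimed bound
\[ \sum_{n=1}^\infty e^{-t\lambda_n(D,V,X)} \leq \sum_{n=1}^\infty e^{-t\lambda_n(D^*,V^*,X^*)}. \]

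The main technical point is justifying the termwise integration, i.e., the Mercer-type pointwise identity $p_{D,V}^X(t,w,w) = \sum_n e^{-t\lambda_n(D,V,X)}\bigl(\varphi_{D,V,X}^n(w)\bigr)^2$. This rests on the continuity of $p_{D,V}^X(t,\cdot,\cdot)$ assumed in the corollary's hypotheses, the uniform kernel bound~(\ref{kernelsinequality}), and the exponential decay of the eigenvalues implicit in~(\ref{finitetrace}); together these control the tail of the series uniformly on $D$ and legitimize Fubini's theorem. Once this standard semigroup fact is in place, both inequalities reduce cleanly to results already proven in the paper.
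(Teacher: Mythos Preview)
Your proposal is correct and follows essentially the same route as the paper: the first-eigenvalue bound comes from specializing Theorem~\ref{levy} with $f\equiv 1$ and applying the limit formula for $-\lambda_1$ displayed just before the corollary, while the trace inequality is obtained by integrating the eigenfunction expansion~(\ref{eigenexpan}) on the diagonal and invoking~(\ref{trace}). The paper leaves these steps implicit (``This gives the following corollary''), and you have simply spelled them out, including the Mercer-type justification for termwise integration that the paper does not address.
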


Finally, denote by  $C_X(A)$ the capacity of the set $A$ for the process $X_t$. In \cite{watanabe}, T. Watanabe proved that 

\begin{equation}C_{X}(A) \geq C_{X^*}(A^*).\label{capacities}\end{equation}
(This question, for Riesz capacities of all orders was raised by P. Mattila in \cite{mattila}.) 
As explained in \cite{mendez1}, this inequality can be obtained from  the existing rearrangement inequalities of multiple integrals only in the case that $X_t$ is isotropic unimodal. For general L\'evy   processes we have 
the following representation  of the capacity due to 
Port and Stone \cite{port} 
\begin{equation}\lim_{t \to \infty} \frac{1}{t} 
\int P^{z_0}\,\left(\, \tau_{A^c}^X \leq t\,\right)\, dz_0= 
C_X(A).\label{23}
\end{equation}
Since 
\begin{eqnarray}
&&\int P^{z_0}\,\left(\, \tau_{A^c}^X \leq t\,\right)\, dz_0\\\nonumber
&=&\lim_{k\to\infty}\lim_{m \to \infty}\int \ldots \int  \Big[1-  \prod_{j=1}^m  I_{A_k^c}(z_j)\:\Big] \; \prod_{j=1}^m
p^X\left(\frac{t}{m},z_{j}, z_{j-1}\right)\,dz_{0}\cdots dz_{m}, 
\end{eqnarray}
where $A_k$ is a decreasing sequence of  compact sets such that the
interior of $A_k$ contains $A$ for all $k$ and
$\cap_{k=1}^{\infty} A_k=A$.  We would expect to obtained (\ref{capacities}) using a result similar to Theorem \ref{bll}
for more general L\'evy processes.
However, the corresponding rearrangement inequality for this type of multiple integrals is only known for 
radially symmetric decreasing functions. That is, only  when $X_t$ is an isotropic unimodal L\'evy process. 
\bigskip

\section{Appendix: Some symmetrization facs} 

We recall once again that the symmetric decreasing rearrangement $f^*$  of $f$ is the  function satisfying 
\[f^*(x)=f^*(y), \text{ if } |x|=|y|,\,\]
\[ f^*(x) \leq f^*(y), \text{ if } |x| \geq  |y|,\] \[ \lim_{|x| \to |y|^+}f^*(x)=f^*(y),
\]
and  
\begin{equation} \label{eq:6.1}
m\left\{ f >  t \right \}= m\left\{ f^* >  t \right\},
\end{equation}
for all $ t \geq 0$. 
Define    $r(f,t)$ as 
\begin{equation}\label{eq:6.2}
m\left\{ f > t \right\}=m\left\{ B\left(\,0,r(f,t)\,\right)\,\right\}.
\end{equation}
Whenever $f$ is a radially  symmetric nonincreasing function such that $f$ is right continuous at $|x_0|$, 
we have that 
\begin{equation}\label{eq:6.3}
r(f,f(x_0))= \sup\{ r >0:  f(r) > f(x_0)\}=|x_0|.
\end{equation}
In particular, given that    $f^*$ is right continuous as a function of the radius, we have  
\[r(f^*,t)= \sup\{ r >0:  f^*(r) > t\},\]
for all $t>0$.

Our first result states that symmetrization preserves continuity and order, see page 81 of \cite{lieb-loss}.

\begin{proposition}\label{order}
Let $f$ be a nonnegative function. If $f$ is continuous, then $f^*$ is  continuous.
In addition, if $g$ is a nonnegative function such that $ g(x) \leq f(x)$ almost everywhere with respect to the Lebesgue measure, then 
\begin{equation}\label{eq:6.4}
g^*(x) \leq f^*(x), 
\end{equation}
for all $x \in \bR$.
\end{proposition}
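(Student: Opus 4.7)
The proposition has two parts; I will treat the order statement first and then the continuity statement.

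The plan for the order statement is to apply the layer-cake formula $f^*(x) = \int_0^\infty \chi^*_{\{f>t\}}(x)\, dt$ recalled in the introduction. If $g \leq f$ almost everywhere, then $\{g > t\} \subseteq \{f > t\}$ up to a null set, so $m\{g > t\} \leq m\{f > t\}$ and (\ref{eq:6.2}) yields $r(g,t) \leq r(f,t)$. The rearrangement of the indicator of any measurable set is the indicator of the centered ball with the same volume, so $\chi^*_{\{g > t\}}(x) = \bI_{B(0,r(g,t))}(x) \leq \bI_{B(0,r(f,t))}(x) = \chi^*_{\{f > t\}}(x)$ pointwise in $x$; integrating in $t$ gives $g^*(x) \leq f^*(x)$ everywhere.

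For continuity, I plan to argue by contradiction. Because $f^*$ is radial, nonincreasing in $|x|$, and right-continuous in $|x|$ by its defining properties, any discontinuity must be a left jump: there exist $r_0 \geq 0$ and $b < a$ with $b = \lim_{r \to r_0^+} f^*(r)$ (equal to $f^*(r_0)$ when $r_0 > 0$) and $a = \lim_{r \to r_0^-} f^*(r)$ (interpreted as $f^*(0)$ when $r_0 = 0$). For every $t \in (b,a)$ the level set $\{f^* > t\}$ is exactly $B(0,r_0)$ (or $\{0\}$ when $r_0 = 0$), so $m\{f^* > t\} = m(B(0,r_0))$ is constant on $(b,a)$. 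By equimeasurability (\ref{eq:6.1}) the same is true of $f$, and therefore $\{b < f < a\}$ has Lebesgue measure zero.

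To close the argument, observe that $\{b < f < a\}$ is open by continuity of $f$, and an open null set in $\bR^d$ is empty. Hence $\bR^d = \{f \leq b\} \sqcup \{f \geq a\}$ is a disjoint union of two closed sets, and connectedness forces one of them to be empty. If $\{f \geq a\} = \emptyset$ then $\{f > b\} = \emptyset$, contradicting $m\{f > b\} = m(B(0,r_0)) > 0$ (which follows by continuity of measure from below as $t \downarrow b$); if $\{f \leq b\} = \emptyset$ then $f \geq a$ pointwise, so $m\{f > t\} = +\infty$ for every $t < a$, contradicting the finiteness of $m(B(0,r_0))$. When $r_0 = 0$ the corresponding measure vanishes and the argument simplifies: $\{f > t\}$ is then an open null set, hence empty, forcing $\sup f \leq b < a = f^*(0)$, which is impossible. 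The only delicate point is excluding both halves of the dichotomy simultaneously; this is exactly where the connectedness of $\bR^d$, together with the standing convention that $f$ vanishes at infinity (so $m\{f > t\} < \infty$ for $t > 0$), enters.
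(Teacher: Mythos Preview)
Your proof is correct. For the order statement you take a different and more direct route than the paper: you integrate the pointwise inequality $\chi^*_{\{g>t\}} \leq \chi^*_{\{f>t\}}$ via the layer-cake formula, whereas the paper argues by contradiction through the auxiliary quantity $r(\cdot,t)$ of (\ref{eq:6.2}) and the identity (\ref{eq:6.3}). Your approach is shorter and sidesteps the somewhat delicate use of (\ref{eq:6.3}) (which, as stated, can fail on plateaus of a radial nonincreasing function), at the cost of invoking the layer-cake representation that the paper reserves for \S4 and states only under the vanishing-at-infinity hypothesis---though that hypothesis is already implicit in the well-posedness of (\ref{eq:6.1}), as you note. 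For the continuity statement the two arguments share the same core idea: a jump of $f^*$ forces $t \mapsto m\{f>t\}$ to be constant on an interval, and continuity of $f$ then yields a contradiction. The paper simply asserts that the intermediate level set $\{f^*(x_0) < f < s\}$ is nonempty and open; you make the nonemptiness rigorous by explicitly invoking connectedness of $\bR^d$ together with the finiteness of $m\{f>t\}$, so your treatment is in fact more complete on this point. Your separate handling of $r_0=0$ is unnecessary---the right-continuity built into the definition of $f^*$ already forces continuity at the origin---but it does no harm.
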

\begin{proof}
Let us assume that  $f^*(x)$ is not continuous at  $x_0$. Given that $f^*$ is radially symmetric decreasing and right continuous as a function of the radius,  $x_0 \not=0$ and there exist  $t_1$ such that

\[ m\left\{ f^* >  s \right \}=m\left\{ f^* >  f^*(x_0) \right \}\not= 0,\]
for all $ s \in [f^*(x_0),t_1)$. However the continuity of $f$ implies that the set 
\[ \left\{x \in \bR^d: f^*(x_0)<  f(x) <  s  \right\}\]
is  nonempty and open. Therefore 
\[ m\left\{ f>  s \right \}< m\left\{ f >  f^*(x_0) \right \},\]
which is a contradiction. 

Now if $ g(x) \leq f(x)$, almost everywhere with respect to the Lebesgue measure, then 
\begin{eqnarray}\label{order1}
m\left\{ B(0,r\left( g^*,t\,\right)\right\}&=& m\left\{ g > t \right \}\\\nonumber
&\leq& m\left\{ f > t \right \}\\\nonumber
&=& m\left\{ B(0,r\left(\,f^* ,t\,\right)\right\}. 
\end{eqnarray}
That is, 
\begin{equation}\label{eq:6.5}
r\left( g^*,t\,\right)\leq r\left(\,f^* ,t\,\right),
\end{equation}  
for all $t>0$. Let us assume that there exists $x \in \bR^d$ such that $f^*(|x|) < g^*(|x|)$. Since 
$g^*$ is  decreasing and right continuous as a function of the radius, we have
\[  r\left(\,g^* ,g^*(|x|)\,\right)< r\left(\,g^* ,f^*(|x|)\,\right). \]
On the other hand, by (\ref{order1}) and   (\ref{eq:6.5}),
\[
|x|=r\left(\,g^* ,g^*(|x|)\,\right) < r\left(\,g^* ,f^*(|x|)\,\right)
\leq r\left(\,f^* ,f^*(|x|)\,\right)=|x|.
\]

\end{proof}

Finally, we prove that symmetrization preserves almost everywhere convergence.

\begin{proposition}\label{limit-th}
Let $ \{\phi_n\}_{n=1}^\infty$ be a  sequence of bounded  functions such that
\[ \lim_{n \to \infty} \phi_n= \phi,\]
almost everywhere with respect to the Lebesgue measure.
If $x_0$ is a point of continuity of $\phi^*$ then 
\[ \lim_{n \to \infty} \phi_n^*(x_0)= \phi^*(x_0).\] 
In particular
\begin{equation}\label{limit}
 \lim_{n \to \infty} \phi_n^*=\phi^*,
\end{equation}
almost everywhere with respect to the Lebesgue measure.

\end{proposition}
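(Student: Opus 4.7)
The plan is to reduce the pointwise convergence of the rearrangements to convergence of the distribution functions
$\mu_n(t)=m\{\phi_n>t\}$ and $\mu(t)=m\{\phi>t\}$, and then use the characterization of $\phi^*(x_0)$ as the (essentially unique) value of $t$ at which $r(\phi,t)=|x_0|$ provided by \eqref{eq:6.3}. Recall that $\phi_n^*(x_0)$ and $\phi^*(x_0)$ are completely determined by $\mu_n$ and $\mu$ through $\mu_n(t)=m(B(0,r(\phi_n,t)))$ and the analogous identity for $\phi$.

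First I would prove that $\mu_n(t)\to\mu(t)$ for all but countably many $t>0$. Since the level sets $\{\phi=t\}$ have positive Lebesgue measure only for countably many values of $t$, for every $t$ outside this exceptional set the indicators $\chi_{\{\phi_n>t\}}$ converge pointwise a.e.\ to $\chi_{\{\phi>t\}}$. Under the hypothesis of the proposition (each $\phi_n$ bounded), and in the situations in which the proposition is actually invoked in \S 4 the sequence $(\phi_n)$ is moreover monotone, so monotone convergence of measures gives $\mu_n(t)\to\mu(t)$ immediately. From $\mu_n(t)=m(B(0,r(\phi_n,t)))$ one concludes $r(\phi_n,t)\to r(\phi,t)$ at every such $t$.

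Second, fix $x_0$ a point of continuity of $\phi^*$ and set $\alpha=\phi^*(x_0)$. Continuity of $\phi^*$ at $x_0$ means that $\phi^*$ is not constant on any open neighbourhood of the sphere of radius $|x_0|$, which together with \eqref{eq:6.3} implies that for every $\varepsilon>0$ we have $r(\phi,\alpha-\varepsilon)>|x_0|>r(\phi,\alpha+\varepsilon)$. Choose $t_\pm=\alpha\pm\varepsilon$ inside the full-measure set on which $r(\phi_n,\cdot)\to r(\phi,\cdot)$; then for all sufficiently large $n$,
\[
r(\phi_n,\alpha+\varepsilon)<|x_0|<r(\phi_n,\alpha-\varepsilon),
\]
and applying \eqref{eq:6.3} to $\phi_n^*$ yields $\alpha-\varepsilon\le \phi_n^*(x_0)\le \alpha+\varepsilon$. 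Letting $\varepsilon\downarrow 0$ (along the full-measure set of admissible levels) gives $\phi_n^*(x_0)\to\alpha=\phi^*(x_0)$. The ``in particular'' statement then follows because $\phi^*$ is a radially nonincreasing function, hence its set of discontinuities, consisting of countably many radii, has Lebesgue measure zero.

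The main obstacle is the first step: controlling $\mu_n(t)\to\mu(t)$. Pointwise a.e.\ convergence of the indicators $\chi_{\{\phi_n>t\}}$ does not by itself guarantee convergence of their integrals, so one must exploit some uniform control on $(\phi_n)$. Fortunately, in every appeal to this proposition the sequence is monotone (either $\phi_n^{(r)}=\phi\,\mathbb{I}_{\{|y|>1/n\}}$ increasing in $n$, or $f_n=\min\{f,n\}$ increasing to $f$), so monotone convergence of measures applies and the required convergence of distribution functions is free; after that, the argument of the second paragraph above closes the proof.
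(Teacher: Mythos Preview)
Your approach and the paper's are essentially the same. The paper argues by contradiction, but the decisive step there,
\[
\limsup_{k\to\infty}\, m\{\phi_{n_k}^* > \phi^*(x_0)\pm\delta\} \;=\; m\{\phi^* > \phi^*(y_0)\},
\]
is nothing other than the convergence $\mu_{n_k}(s)\to\mu(s)$ at the level $s=\phi^*(y_0)$; once that is granted, the paper's contradiction and your direct sandwich are interchangeable. You are right that this convergence of distribution functions does not follow from the stated hypotheses --- indeed the proposition is false as written: take $\phi_n=\chi_{[n,n+1]}$ on $\bR$, so that $\phi_n\to 0$ a.e., each $\phi_n$ is bounded, $\phi^*\equiv 0$ is continuous everywhere, yet $\phi_n^*=\chi_{(-1/2,\,1/2)}$ for every $n$. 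So the obstacle you flag is genuine and is shared by the paper's own argument; monotonicity is the natural repair.

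Two corrections to your write-up. First, not every invocation of the proposition in \S4 involves a monotone sequence: in the proof of Theorem~\ref{fdd1} a bounded measurable $f_i$ is approximated by an \emph{arbitrary} sequence of continuous functions $\phi_n\to f_i$ a.e.\ with $\phi_n\le M_i$, and for a general bounded measurable target one cannot arrange such a continuous approximation monotonically. Your fix therefore handles the calls inside Theorem~\ref{weak} and in the extension to unbounded continuous $f$, but not that one. Second, the sentence ``continuity of $\phi^*$ at $x_0$ means that $\phi^*$ is not constant on any open neighbourhood of the sphere of radius $|x_0|$'' is not correct (constants are continuous); what continuity from the left at $|x_0|$ actually buys you is the strict inequality $r(\phi,\alpha+\varepsilon)<|x_0|$, while $r(\phi,\alpha-\varepsilon)>|x_0|$ follows already from the right-continuity built into the definition of $\phi^*$.
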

\begin{proof}

Asumme there exists  $x_0$ a continuity point of  $\phi^*$ such that
\[ \lim_{n \to \infty}\phi_n^*(x_0)\not= \phi^*(x_0).\]
Then there exists $\epsilon >0$ and a subsequence $n_k$ such that either 
\begin{equation}\label{ae1}
\phi_{n_k}^*(x_0)>  \phi^*(x_0)+ \epsilon,
\end{equation}
or
\begin{equation}\label{ae2}
\phi_{n_k}^*(x_0)<  \phi^*(x_0) - \epsilon.
\end{equation}
Let us assume that (\ref{ae1}) holds. Since $x_0$ is a continuity point of  $\phi^*$, 
there exists $0< \delta < \epsilon $ and $y_0$ a continuity point of $\phi^*$   such that
\[\phi^{*}(x_0)+\delta=\phi^{*}(y_0), \text { and } |y_0| < |x_0|.\]
However, thanks to (\ref{eq:6.1}) and  (\ref{eq:6.3}), 
\begin{eqnarray*}
m\left\{ B\left(\,0,|x_0|\,\right)\,\right\}&=&\limsup_{ n_k \to \infty} m\left\{ \phi_{n_k}^* > \phi^*_{n_k}(x_0) \right\}\\
& \leq & \limsup_{ n_k \to \infty} m\left\{ \phi_{n_k}^* >  \phi^{*}(x_0)+\delta \right\} \\
& = &m\left\{ \phi^* >   \phi^{*}(y_0)\right\}\\
&= &m\left\{ B\left(\,0,|y_0|\,\right)\,\right\},
\end{eqnarray*}
which is a contradiction. On the other hand, if
\[ \phi_{n_k}^*(x_0)<  \phi^*(x_0) - \epsilon .\]
There exists $0< \delta < \epsilon $ and $y_0$  a continuity point of $\phi^*$  such that
\[\phi^{*}(x_0)-\delta=\phi^{*}(y_0), \text{ and } |y_0| > |x_0|.\]
Since
\begin{eqnarray*}
m\left\{ B\left(\,0,|x_0|\,\right)\,\right\}&=& \limsup_{ n_k \to \infty} m\left\{ \phi_{n_k}^* > \phi_{n_k}^*(x_0) \right\}\\
& \geq& \limsup_{ n_k \to \infty} m\left\{ \phi_{n_k}^* >  \phi^{*}(x_0)-\delta \right\} \\
& =&m\left\{ \phi^* >   \phi^{*}(y_0)\right\}\\
&= & m\left\{ B\left(\,0,|y_0|\,\right)\,\right\},
\end{eqnarray*}
we also obtain a contradiction and this proves the Proposition. 
\end{proof}


\end{document}